\DeclareMathAlphabet{\pazocal}{OMS}{zplm}{m}{n}
\newtheorem{theorem}{Theorem}
\newtheorem{lemma}{Lemma}
\newtheorem{claim}{Claim}
\newtheorem{example}{Example}
\newcommand{\R}{{\mathbb{R}}}
\newcommand{\E}{\mathbb{E}}
\begin{document}
\title{Secure State Estimation: Optimal Guarantees against Sensor Attacks in the Presence of Noise}
\author{\IEEEauthorblockN{Shaunak Mishra\IEEEauthorrefmark{1},
Yasser Shoukry\IEEEauthorrefmark{1},
Nikhil Karamchandani\IEEEauthorrefmark{1},
Suhas Diggavi\IEEEauthorrefmark{1} and
Paulo Tabuada\IEEEauthorrefmark{1}}
\IEEEauthorblockA{\IEEEauthorrefmark{1}Electrical Engineering Department, University of California, Los Angeles}
\thanks{The work was supported by NSF grant 1136174 and
DARPA under agreement number FA8750-12-2-0247.}
}

\maketitle

\begin{abstract}
Motivated by the need to secure cyber-physical systems against attacks,
we consider the problem of estimating the state of a noisy linear dynamical system
when a subset of sensors is arbitrarily corrupted by an adversary.
We propose a secure state estimation algorithm and derive (optimal) bounds on the achievable state estimation error.
In addition, as a result of independent interest, we give a coding theoretic interpretation for prior work on secure state estimation against sensor attacks in a noiseless dynamical system.
\end{abstract}

\IEEEpeerreviewmaketitle

\section{Introduction}
Cyber-physical systems (CPS) manage the vast majority of today's critical infrastructure
and securing such CPS against malicious attacks is a problem of growing importance \cite{Bullo_CSM}.
As a stepping stone towards securing complex CPS deployed in practice, several recent works have studied security problems in the context of linear dynamical systems \cite{Bullo_CSM, Hamza_TAC, YasserETPGarXiv, shaunak_CDC14, Yasser_SMT, Joao_ACC} leading to a fundamental understanding of how the system dynamics can be leveraged for security guarantees.
With this motivation, in this paper we focus on securely estimating the state of a linear dynamical system
from a set of noisy and maliciously corrupted sensor measurements.
We restrict the sensor attacks to be sparse in nature, \emph{i.e.}, an adversary can arbitrarily corrupt a subset of sensors in the system.

\par Prior work related to secure state estimation against sensor attacks in linear dynamical systems can be broadly categorized into three classes depending on the noise model for sensor measurements:
 1) noiseless 2) bounded non-stochastic noise, and 3) Gaussian noise.
For the noiseless setting, the work reported in \cite{Bullo_CSM, Hamza_TAC,YasserETPGarXiv} shows that, under a strong notion of observability, sensor attacks  (modeled as a sparse attack vector) can always be detected and isolated, and hence the state of the system can be exactly estimated.
In contrast, when the sensor measurements are affected by noise as well as maliciously corrupted, the problem of distinguishing between noise and attack vector arises.
Results reported in \cite{Yasser_SMT,Joao_ACC,Pajic_ICCPS} are representative of the second class: bounded non-stochastic noise. They provide sufficient conditions for distinguishing the sparse attack vector from bounded noise but do not guarantee the optimality of their estimation algorithm.
The work reported in this paper falls in the third class: Gaussian noise.
Prior work in this class includes \cite{YilinAllerton,Bai_Gupta,BoydKF, Georgios_TSP2}.
In \cite{YilinAllerton}, the analysis is restricted to detecting a class of sensor attacks called \textit{replay} attacks
(\emph{i.e.},
attacks in which legitimate sensor
outputs are replaced with outputs from previous time instants).
In \cite{Bai_Gupta}, the authors focus on the performance degradation of a scalar Kalman filter (\emph{i.e.}, scalar state and a single sensor) when the sensor is under attack.
Since they consider a single sensor setup,
attack sparsity across multiple sensors is not studied,
and in addition,
they focus on an adversary whose objective is to degrade the estimation performance and stay undetected at the same time
(thereby restricting the class of sensor attacks).
In \cite{BoydKF} and \cite{Georgios_TSP2},
robustification approaches for state estimation against sparse sensor attacks are proposed,
but they lack optimality guarantees against arbitrary sensor attacks.
\par In contrast to prior work in the Gaussian noise setup,
we consider a general linear dynamical system and give (optimal) guarantees on the achievable state estimation error against arbitrary sensor attacks.
The following toy example is illustrative of the nature of the problem addressed in this paper and some of the ideas behind our solution.

\begin{example}
Consider a linear dynamical system with a scalar state $x(t)$ such that $x(t+1) = x(t) + w(t)$, where $w(t)$ is the process noise following a Gaussian distribution with zero mean and is instantiated i.i.d. over time. The system has three sensors (indexed by $d$) with outputs $y_d(t) = x(t) + v_d(t)$, where $v_d(t)$ is the sensor noise at sensor $d$. Similarly to the process noise, $v_d(t)$ is Gaussian distributed with zero mean and is instantiated i.i.d. over time. The sensor noise is also independent across sensors.
Now, consider an adversary which can attack any one of the sensors and arbitrarily change its output.
In the absence of sensor noise, it is trivial to detect such an attack since the two good sensors (not attacked by the adversary) will have the same output.
Hence, a \textit{majority} based rule on the outputs leads to the exact state.
However, in the presence of sensor noise, even the good sensors may not have the same output and a simple majority based rule cannot be used for estimation.
In this paper, we build on the intuition that we may still be able to identify sensors whose outputs can lead to a \textit{good} state estimate by leveraging the noise statistics over a large enough time window.
In particular, our approach for this example would be to hypothesize a subset of two sensors as good, and then check whether the outputs from the two sensors are \textit{consistent} with the Kalman state estimate based on outputs from the same subset of sensors.
Furthermore, we show in this paper that such an approach leads to the optimal state estimation error for the given adversarial setup.
\end{example}

\par In this paper, we generalize the Kalman filter based approach in the above example to a general linear dynamical system with sensor and process noise.
Our main contributions can be listed as follows:
\begin{itemize}
\item We give optimal guarantees on the achievable state estimation error against arbitrary sensor attacks and propose an algorithm to achieve the same guarantees;
\item As a result of independent interest, we give a coding theoretic interpretation (alternate proof) for the necessary and sufficient conditions for secure state estimation in the absence of noise \cite{ Hamza_TAC, YasserETPGarXiv, Joao_ACC} (known as the sparse observability condition).
\end{itemize}

\par The remainder of this paper is organized as follows. Section~\ref{sec:setup} deals with the setup.
The main results are stated in Section~\ref{sec:main_results}.
Section~\ref{sec:scalar_state} considers the simpler setting of a scalar state and illustrates the main ideas behind our estimation algorithm and Section~\ref{sec:vector_state_prediction} considers its generalization to a vector state. Finally, we discuss the coding theoretic view of the sparse observability condition \cite{YasserETPGarXiv} in Section~\ref{sec:sparse_proof}.

\section{Setup} \label{sec:setup}
\subsection{System model} 
We consider a linear dynamical system with sensor attacks as shown below:
\begin{align}
\mathbf{x}\left(t+1\right) = \mathbf{Ax}(t) + \mathbf{w}(t), \quad \mathbf{y}(t) = \mathbf{C} \mathbf{x}(t) + \mathbf{v}(t) + \pmb{\phi}(t), \label{eq:vector_system_model}
\end{align} where $\mathbf{x}(t)\in \mathbb{R}^n$ denotes the state of the plant at time $t \in \mathbb{N}$,  $\mathbf{w}(t) \in \mathbb{R}^n$ denotes the process noise at time $t$, $\mathbf{y}(t) \in \mathbb{R}^p$ denotes the output of the plant at time $t$ and $ \mathbf{v}(t) \in \mathbb{R}^p$ denotes the sensor noise at time $t$. The process noise $\mathbf{w}(t) \sim \mathcal{N} \left ( \mathbf{0}, \sigma^2_w \mathbf{I}_n\right)$, \emph{i.e.}, $\mathbf{w}(t)$ is Gaussian distributed with zero mean and covariance matrix $\sigma^2_w \mathbf{I}_n$, where $ \mathbf{I}_n$ is the identity matrix of dimension $n$ and $\sigma_w \in \mathbb{R}$. Similarly, sensor noise $\mathbf{v}(t) \sim \mathcal{N} \left ( \mathbf{0}, \sigma^2_v \mathbf{I}_p\right)$. Both $\mathbf{v}(t)$ and $\mathbf{w}(t)$ are instantiated i.i.d. over time, and $\mathbf{v}(t)$ is independent of $\mathbf{w}(t)$.

\par The sensor attack vector $\pmb{\phi}(t) \in \R^p$ in \eqref{eq:vector_system_model} is introduced by a $k$-adversary defined as follows.
A $k$-adversary has access to any $k$ out of the $p$ sensors in the system.
Specifically, let $\pmb{\kappa} \subseteq \{1,2,\ldots p\}$ denote the set of attacked sensors
(with $|\pmb{\kappa}| = k$).
The $k$-adversary can observe the actual outputs in the $k$ attacked sensors and change them arbitrarily.
Specifically, the output of an attacked sensor $j \in \pmb{\kappa}$ can be expressed as
\begin{align}
y_j (t) = \mathbf{c}^T_j \mathbf{x}(t) + v_j(t) + \phi_j(t),
\end{align}
where $T$  denotes the matrix transpose operation,
$\mathbf{c}_j^T$ is the $j${th} row of $\mathbf{C}$,
$v_j(t)$ is the noise at sensor $j$
and $\phi_j(t)$
is the adversarial corruption introduced at sensor $j$.
For $j \notin \pmb{\kappa}$, $\phi_j(t) =0$.
The adversary's choice of $\pmb{\kappa}$ is unknown but is assumed to be constant over time (static adversary).
The adversary is assumed to have unbounded computational power, and knows the system parameters (\emph{e.g.,} $\mathbf{A}$ and $\mathbf{C}$) and noise statistics (\emph{e.g.,} $\sigma^2_w$ and $\sigma^2_v$). 
However, the adversary is limited to have only causal knowledge of the process noise and the sensor noise in good sensors (not attacked by the adversary). We discuss this assumption in more detail in Section~\ref{sec:causal_assumptions}.
\subsection{State estimation: prediction and filtering}
In this paper, we address two state estimation problems: (1) state prediction and (2) state filtering. 

\par In the state prediction problem, the goal is to estimate the state at time $t$ based on outputs till time $t-1$.
In the absence of sensor attacks,
using a Kalman filter for predicting the state in \eqref{eq:vector_system_model} leads to the optimal (MMSE) error covariance asymptotically \cite{kailath_book}.
In particular, 
the Kalman filter update rule can be written as:
\begin{align}
\hat{\mathbf{x}}(t+1) = \mathbf{A}\hat{\mathbf{x}} (t) + \mathbf{L}(t) \left(\mathbf{y}(t) - \mathbf{C} \hat{\mathbf{x}}(t) \right), \label{eq:kalman_prediction_update}
\end{align}
where $\hat{\mathbf{x}}(t+1)$ is the state estimate at time $t+1$ and $\mathbf{L}(t)$ is the Kalman filter gain.
For a Kalman filter in steady state \cite{kailath_book}, the steady state gain satisfies $\mathbf{L}(t) = \mathbf{L}$.
Also, we use $P_{opt,\mathbf{s}}$ to denote the trace of steady state (prediction) error covariance matrix \cite{kailath_book} obtained by using a Kalman filter on a sensor subset $\mathbf{s} \subseteq \{1,2,\ldots p\}$.

\par In contrast to the prediction problem, the goal in the state filtering problem is to estimate the state at time $t$ based on outputs till time $t$. In the absence of sensor attacks,
a Kalman filter update rule similar to \eqref{eq:kalman_prediction_update} can be used for the filtering problem \cite{kailath_book} (see Appendix \ref{sec:filtering} for details) and we use $F_{opt,\mathbf{s}}$ to denote the trace of steady state (filtering) error covariance matrix obtained by using a Kalman filter on a sensor subset $\mathbf{s}$.

\subsection{Causal knowledge assumptions} \label{sec:causal_assumptions}
At time $t$, the attack vector $\pmb{\phi}(t)$ in \eqref{eq:vector_system_model} depends on the knowledge of the adversary at time $t$, and in this context, we limit the adversary's knowledge of the process and sensor noise along the lines of causality.
In particular,
for the prediction problem we assume the following for a $k$-adversary:
\begin{itemize}
\item[(A1)] The adversary's knowledge at time $t$ is statistically independent of $\mathbf{w}(t')$ for $t' > t$, \emph{i.e.}, $\pmb{\phi}(t)$ is statistically independent of $\{\mathbf{w}(t')\}_{t'>t}$;
\item[(A2)] For a \textit{good} sensor $d \in \{1,2,\ldots p\} - \pmb{\kappa}$,
the adversary's knowledge at time $t$ (and hence $\pmb{\phi}(t)$) is statistically independent of $\{v_d(t') \}_{t' > t}$.
\end{itemize}
Intuitively, assumptions (A1) and (A2) limit the adversary to have only causal knowledge of the process noise and the sensor noise in good sensors (not attacked by the adversary).
Note that, apart from (A1) and (A2), we do not impose any restrictions on the statistical properties, boundedness and the time evolution of the corruptions introduced by the $k$-adversary.
In the filtering problem, we replace assumptions (A1) and (A2) with (A3) and (A4) as described below:
\begin{itemize}
\item[(A3)] The adversary's knowledge at time $t$ is statistically independent of $\mathbf{w}(t')$ for $t' \geq t$, \emph{i.e.}, $\pmb{\phi}(t)$ is statistically independent of $\{\mathbf{w}(t')\}_{t' \geq t}$;
\item[(A4)] For a good sensor $d \in \{1,2,\ldots p\} - \pmb{\kappa}$,
the adversary's knowledge at time $t$ (and hence $\pmb{\phi}(t)$) is statistically independent of
$\{v_d(t') \}_{t' \geq t}$.
\end{itemize}
Clearly, (A3) is a stronger version
of (A1), requiring $\pmb{\phi}(t)$ to be independent of $\mathbf{w}(t)$. Similarly, (A4) is a stronger version of (A2).

\subsection{Sparse observability condition}
For the matrix pair $\left( \mathbf{A}, \mathbf{C} \right)$, the observability matrix $\mathbf{O}$ with observability index $\mu$ is defined
as shown below:
\begin{align}
\mathbf{O} = 
\begin{bmatrix} 
\mathbf{C} \\ \mathbf{C} \mathbf{A} \\ \vdots \\ \mathbf{C} \mathbf{A}^{\mu-1} 
\end{bmatrix}.
\end{align}
In this context, a linear dynamical system, characterized by the pair $\left( \mathbf{A}, \mathbf{C} \right)$, is said to be observable if there exists a positive integer $\mu$ such that $\mathbf{O}$ has full column rank.
In the absence of sensor and process noise, the conditions under which state estimation can be done despite sensor attacks have been studied in \cite{Hamza_TAC,YasserETPGarXiv,Joao_ACC}.
In particular, a linear dynamical system as shown in \eqref{eq:vector_system_model} is called $\theta$-sparse observable if for every subset $\mathbf{s} \subseteq \{1,\ldots p\}$ of size $\theta$, the pair $\left( \mathbf{A} , \mathbf{C}_{\mathbf{s}}\right)$ is observable (where $\mathbf{C}_{\mathbf{s}}$ is formed by the rows of $\mathbf{C}$ corresponding to
sensors indexed by the elements of $\mathbf{s}$).
Also, $\theta$ is the smallest positive integer to satisfy the above
observability
property.
The condition:
\begin{align}
\theta \leq p - 2k, \label{eq:theta_condition}
\end{align}
is necessary and sufficient for \textit{exact} state estimation against a $k$-adversary in the absence of process and sensor noise \cite{YasserETPGarXiv}; we will refer to this condition as the sparse observability condition.
We provide a coding theoretic interpretation for the same in Section~\ref{sec:sparse_proof}.

\section{Main results} \label{sec:main_results}
We first state our achievability result followed by an impossibility result.
\begin{theorem}[\textbf{Achievability}] \label{thm:ach}
Consider the linear dynamical system defined in~\eqref{eq:vector_system_model} satisfying the sparse observability condition \eqref{eq:theta_condition} against a $k$-adversary.
Assuming (A1) and (A2),
and a time window $G = \{ t_1, t_1 +1 , \ldots t_1 +N-1\}$ for the state prediction problem,
the following bound on the prediction error is achievable against a $k$-adversary.
For any $\epsilon > 0$ and $\delta > 0$,
there exists a large enough $N$ such that:
\begin{align}
\mathbb{P} \left ( 
\frac{1}{N} \sum_{t \in G } \mathbf{e}^T(t)\mathbf{e}(t) \le \max_{\mathbf{s} \subset \{1,2,\ldots p\}, \; |\mathbf{s}| = p-k} \left(  P_{opt,\mathbf{s}}  \right)  + \epsilon 
\right ) \geq 1- \delta ,
 \label{eq:sse}
\end{align}
where $\mathbf{e}(t) = \mathbf{x}(t) -\hat{ \mathbf{x}}(t)$ is the estimation error for the state estimate $\hat{ \mathbf{x}}(t)$.
In other words, with high probability (w.h.p.), the bound
$\displaystyle{\limsup_{N\rightarrow \infty} 
\frac{1}{N} \sum_{t \in G } \mathbf{e}^T(t)\mathbf{e}(t) \le \max_{\mathbf{s} \subset \{1,2,\ldots p\}, \; |\mathbf{s}| = p-k} \left(  P_{opt,\mathbf{s}} \right)}$
is achievable.
Similarly, for the state filtering problem, assuming (A3) and (A4) against a \mbox{$k$-adversary}, the following bound on the corresponding filtering error $\mathbf{e}(t)$ is achievable w.h.p.:
\begin{align}
 \limsup_{N\rightarrow \infty} \frac{1}{N} \sum_{t \in G } \mathbf{e}^T(t)\mathbf{e}(t) \le \max_{\mathbf{s} \subset \{1,2,\ldots p\}, \; |\mathbf{s} |= p-k}  \left ( F_{opt,\mathbf{s}} \right) . \label{eq:ssf}
\end{align}
\end{theorem}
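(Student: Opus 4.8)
The plan is to analyze an estimator that hypothesizes, for each candidate subset $\mathbf{s}\subseteq\{1,\ldots,p\}$ with $|\mathbf{s}|=p-k$, that $\mathbf{s}$ consists entirely of good sensors; runs the steady-state Kalman predictor (resp. filter) driven only by $\{y_d(t)\}_{d\in\mathbf{s}}$ to produce $\hat{\mathbf{x}}_\mathbf{s}(t)$ with error $\mathbf{e}_\mathbf{s}(t)=\mathbf{x}(t)-\hat{\mathbf{x}}_\mathbf{s}(t)$; and then applies a \emph{consistency test} comparing the empirical second-order statistics of the residuals $r_d(t)=y_d(t)-\mathbf{c}_d^T\hat{\mathbf{x}}_\mathbf{s}(t)$ over the window $G$ against the theoretical steady-state innovation covariance $\mathbf{C}_\mathbf{s}P_\mathbf{s}\mathbf{C}_\mathbf{s}^T+\sigma_v^2\mathbf{I}$ predicted for a correctly specified filter on $\mathbf{s}$, where $\trace{P_\mathbf{s}}=P_{opt,\mathbf{s}}$. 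The output is the estimate from any subset that passes within a tolerance I will let shrink as $N$ grows. The argument then splits into a soundness direction (a correct subset passes and meets the target) and a completeness direction (every passing subset is good enough).

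First I would fix the combinatorial backbone. Since the static $k$-adversary corrupts exactly $\pmb{\kappa}$ with $|\pmb{\kappa}|=k$, the set $\mathbf{s}^\star=\{1,\ldots,p\}\setminus\pmb{\kappa}$ is a fully uncorrupted subset of size $p-k$; more importantly, for any candidate $\mathbf{s}$ the good sensors it contains, $\mathbf{g}=\mathbf{s}\setminus\pmb{\kappa}$, satisfy $|\mathbf{g}|\ge (p-k)-k=p-2k\ge\theta$, so by the sparse observability condition \eqref{eq:theta_condition} the pair $(\mathbf{A},\mathbf{C}_\mathbf{g})$ is observable with finite index. This is the property that will let residual smallness on the good coordinates of $\mathbf{s}$ be upgraded to smallness of the full state error.

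For soundness I would show $\mathbf{s}^\star$ passes and achieves the bound. On $\mathbf{s}^\star$ there is no attack, so $\hat{\mathbf{x}}_{\mathbf{s}^\star}$ is an ordinary steady-state Kalman estimate and its innovation sequence is asymptotically a stationary, ergodic, zero-mean Gaussian white process with exactly the theoretical covariance. Invoking the ergodic theorem for this process, $\frac{1}{N}\sum_{t\in G}\mathbf{e}_{\mathbf{s}^\star}^T(t)\mathbf{e}_{\mathbf{s}^\star}(t)\to P_{opt,\mathbf{s}^\star}\le\max_{|\mathbf{s}|=p-k}P_{opt,\mathbf{s}}$ and the empirical residual statistics converge to the theoretical ones; hence for $N$ large enough $\mathbf{s}^\star$ lies inside the test tolerance except on an event of probability at most $\delta$, so the pool of passing subsets is nonempty w.h.p.

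The completeness direction is the crux, and the step I expect to be the main obstacle: I must show that \emph{every} passing subset $\mathbf{s}$ yields $\frac{1}{N}\sum_{t\in G}\mathbf{e}_\mathbf{s}^T(t)\mathbf{e}_\mathbf{s}(t)\le P_{opt,\mathbf{s}}+\epsilon$ w.h.p., so that outputting any passing subset meets $\max_{|\mathbf{s}|=p-k}P_{opt,\mathbf{s}}+\epsilon$. Writing $r_d(t)=\mathbf{c}_d^T\mathbf{e}_\mathbf{s}(t)+v_d(t)$ for $d\in\mathbf{g}$ and expanding $\frac{1}{N}\sum_t r_d^2(t)$, the cross term $\frac{2}{N}\sum_t(\mathbf{c}_d^T\mathbf{e}_\mathbf{s}(t))\,v_d(t)$ vanishes w.h.p. precisely because (A1)--(A2) force $v_d(t)$ to be independent of the prediction $\hat{\mathbf{x}}_\mathbf{s}(t)$, hence of $\mathbf{e}_\mathbf{s}(t)$; passing the test therefore drives $\frac{1}{N}\sum_t(\mathbf{c}_d^T\mathbf{e}_\mathbf{s}(t))^2$ small for every $d\in\mathbf{g}$. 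The hard part will be converting these per-coordinate bounds into a bound on the full averaged state error with the correct constant $P_{opt,\mathbf{s}}$ rather than the weaker $P_{opt,\mathbf{g}}$ available from the good sensors alone: I would stack the good-sensor residuals over an observability window and use a left inverse of $\mathbf{O}_\mathbf{g}$ to reconstruct $\mathbf{e}_\mathbf{s}(t)$ from $\{\mathbf{c}_d^T\mathbf{A}^i\mathbf{e}_\mathbf{s}(t)\}$, carefully accounting for the extra process-noise and filter-correction terms that appear in passing from $\mathbf{A}^i\mathbf{e}_\mathbf{s}(t)$ to $\mathbf{e}_\mathbf{s}(t+i)$, controlling the former by their known statistics and the latter through stability of $\mathbf{A}-\mathbf{L}_\mathbf{s}\mathbf{C}_\mathbf{s}$, and matching the result against the whiteness and covariance constraints the test enforces. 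Combining soundness and completeness with a union bound over the finitely many subsets and a tolerance tending to $0$ as $N\to\infty$ yields \eqref{eq:sse}; the limsup form, and the filtering statement \eqref{eq:ssf}, follow by the same scheme with (A3)--(A4) replacing (A1)--(A2), the filtering-residual statistics in place of the innovation statistics, and $F_{opt,\mathbf{s}}$ in place of $P_{opt,\mathbf{s}}$.
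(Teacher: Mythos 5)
Your overall architecture coincides with the paper's: a bank of $\binom{p}{p-k}$ steady-state Kalman filters, a residue-based consistency test with a tolerance that shrinks as $N$ grows, soundness of the uncorrupted set via the law of large numbers, completeness for \emph{any} passing set via the at least $p-2k \geq \theta$ good sensors it must contain, and cross terms killed by (A1)--(A2) (resp. (A3)--(A4)). The gap is precisely in the completeness step that you flag as the main obstacle: your plan stacks the per-time residuals $r_d(t+j) = y_d(t+j) - \mathbf{c}_d^T \hat{\mathbf{x}}_{\mathbf{s}}(t+j)$ and must therefore pass from $\mathbf{C}_{\mathbf{g}}\mathbf{e}_{\mathbf{s}}(t+j)$ to $\mathbf{C}_{\mathbf{g}}\mathbf{A}^j\mathbf{e}_{\mathbf{s}}(t)$ before applying a left inverse of $\mathbf{O}_{\mathbf{g}}$. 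Under attack, the error recursion of the filter on $\mathbf{s}$ is $\mathbf{e}_{\mathbf{s}}(t+1) = \left(\mathbf{A} - \mathbf{L}_{\mathbf{s}}\mathbf{C}_{\mathbf{s}}\right)\mathbf{e}_{\mathbf{s}}(t) + \mathbf{w}(t) - \mathbf{L}_{\mathbf{s}}\left(\mathbf{v}_{\mathbf{s}}(t) + \pmb{\phi}_{\mathbf{s}}(t)\right)$, so the ``filter-correction terms'' you propose to control contain the \emph{arbitrary, unbounded} adversarial inputs $\pmb{\phi}_{\mathbf{s}}(t)$ on the corrupted sensors of $\mathbf{s}$; stability of $\mathbf{A} - \mathbf{L}_{\mathbf{s}}\mathbf{C}_{\mathbf{s}}$ bounds the homogeneous propagation but says nothing about this forcing. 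Even if you patch this by using the test constraints on the attacked sensors' residuals to bound the forcing in empirical mean square, the resulting triangle-inequality reconstruction through $\mathbf{O}_{\mathbf{g}}^{\dagger}$ inflates the bound by factors depending on $\mathbf{O}_{\mathbf{g}}^{\dagger}$, $\mathbf{L}_{\mathbf{s}}$ and powers of $\mathbf{A}$, so you do not recover the sharp constant $P_{opt,\mathbf{s}}$ that the theorem claims.

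The missing idea, which is the paper's key device, is to define the block residue of a $\theta$-subset $i$ against the \emph{propagated current estimate} rather than against the filter's own later estimates: $\mathbf{r}_i(t) = \left[\mathbf{y}_i^T(t) \; \cdots \; \mathbf{y}_i^T(t+\mu_i-1)\right]^T - \mathbf{O}_i \hat{\mathbf{x}}_{\mathbf{s}}(t)$. Because the stacked outputs equal $\mathbf{O}_i \mathbf{x}(t)$ plus process noise (through $\mathbf{J}_i$) and the noise of sensors in $i$ (all good), this gives the \emph{exact} decomposition $\mathbf{r}_i(t) = \mathbf{O}_i \mathbf{e}_{\mathbf{s}}(t) + \mathbf{z}_{i,t:t+\mu_i-1}$ with no attack terms and no filter propagation whatsoever; hence $tr\left(\mathbf{O}_i^{\dagger}\mathbf{r}_i(t)\mathbf{r}_i^T(t)\mathbf{O}_i^{\dagger T}\right) = \mathbf{e}_{\mathbf{s}}^T(t)\mathbf{e}_{\mathbf{s}}(t) + tr\left(\mathbf{O}_i^{\dagger}\mathbf{z}_{i,t:t+\mu_i-1}\mathbf{z}_{i,t:t+\mu_i-1}^T\mathbf{O}_i^{\dagger T}\right) + 2\,\mathbf{e}_{\mathbf{s}}^T(t)\mathbf{O}_i^{\dagger}\mathbf{z}_{i,t:t+\mu_i-1}$, and a test threshold of $P_{opt,\mathbf{s}} + tr\left(\mathbf{O}_i^{\dagger}\mathbf{M}_{\mu_i}\mathbf{O}_i^{\dagger T}\right) + \epsilon$ (note the extra stacked-noise term, absent from your single-time innovation-covariance threshold) yields the sharp bound once the noise term concentrates and the cross term dies. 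Two further details the paper needs and you elide: the window $G$ must be partitioned into $\mu_i$ strided groups $G_l$ so that the vectors $\mathbf{z}_{i,t:t+\mu_i-1}$, $t \in G_l$, are i.i.d. and the LLN applies; and since $\mathbf{e}_{\mathbf{s}}(t)$ is not stationary or ergodic under attack, the cross term cannot be handled by an ergodic theorem --- the paper first extracts the expectation bound $\mathbb{E}\bigl(\frac{1}{N_B}\sum_{t \in G_l}\mathbf{e}^T(t)\mathbf{e}(t)\bigr) \leq P_{opt,\mathbf{s}} + 2\epsilon$ from the test itself, uses it together with Lemma~\ref{lemma:eigen_bound} to show the cross term's variance is $O(1/N_B)$, and concludes by Chebyshev. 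Without the propagated-estimate residue, your completeness argument either fails outright (stability alone) or proves a strictly weaker bound than \eqref{eq:sse}.
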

The achievability in Theorem~\ref{thm:ach} is through our proposed algorithms, which we discuss in the following sections. The impossibility result can be stated as follows.

\begin{theorem}[\textbf{Impossibility}] \label{thm:impos}
Consider the linear dynamical system defined in~\eqref{eq:vector_system_model} and an oracle MMSE estimator that has knowledge of $\pmb{\kappa}$, \emph{i.e.}, the set of sensors attacked by a $k$-adversary.
Then, there exists an attack sequence $\pmb{\phi}(t)$ such that the trace of the prediction error covariance of the oracle estimator is bounded from below as follows:
\begin{align}
 tr \left ( \E \left ( \mathbf{e}(t) \mathbf{e}^T(t) \right )  \right)  \ge  P_{opt,\mathbf{s}},
\end{align}
where $\mathbf{e}(t)$ above is the oracle estimator's prediction error and $\mathbf{s} = \{1,2,\ldots p\} - \pmb{\kappa}$.
Similarly, for the filtering problem,
\begin{align}
 tr \left ( \E \left (\mathbf{e}(t) \mathbf{e}^T(t) \right ) \right)  \ge  F_{opt,\mathbf{s}} .
\end{align}
\end{theorem}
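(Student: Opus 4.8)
The plan is to show that an adversary can render the $k$ attacked channels completely uninformative, so that even an oracle which knows $\pmb{\kappa}$ is reduced to running the optimal estimator on the $p-k$ good sensors alone, whose steady-state error is $P_{opt,\mathbf{s}}$ (resp.\ $F_{opt,\mathbf{s}}$). First I would construct the witnessing attack: for each attacked sensor $j \in \pmb{\kappa}$, the adversary observes the true output $y_j^{\mathrm{true}}(t) = \mathbf{c}_j^T \mathbf{x}(t) + v_j(t)$ and overwrites it with a freshly drawn signal $\tilde{v}_j(t)$, drawn i.i.d.\ (say Gaussian, to keep the joint law Gaussian) and independent of the state trajectory, the process noise, and all sensor noises. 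Concretely the adversary injects $\phi_j(t) = \tilde{v}_j(t) - y_j^{\mathrm{true}}(t)$, so that the reported output is exactly $y_j(t) = \tilde{v}_j(t)$.

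Next I would verify admissibility of this attack. Since $\mathbf{x}(t)$ depends only on $\mathbf{x}(0)$ and $\mathbf{w}(0),\ldots,\mathbf{w}(t-1)$, the injected $\phi_j(t)$ is a function of strictly-past-and-present process noise, the attacked-sensor noise $v_j(t)$, and the fresh noise $\tilde{v}_j(t)$ only. Hence $\phi_j(t)$ is independent of $\{\mathbf{w}(t')\}_{t' \ge t}$ and of every good sensor's noise $\{v_d(t')\}$ for $d \notin \pmb{\kappa}$ at all times (using that the $v_j(t)$ are independent across sensors). This simultaneously establishes (A1)--(A2) for the prediction problem and the stronger (A3)--(A4) for the filtering problem.

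The key probabilistic step is then the following. The reported attacked outputs $\{y_j(t)\}_{j \in \pmb{\kappa}} = \{\tilde{v}_j(t)\}$ are jointly independent of the pair consisting of the state process $\{\mathbf{x}(t)\}$ and the good-sensor outputs $\{y_d(t)\}_{d \in \mathbf{s}}$. Therefore conditioning the state on all outputs coincides with conditioning on the good-sensor outputs alone, so the oracle MMSE estimator's conditional error covariance equals that of the MMSE estimator built from the subset $\mathbf{s}$ only. Because the good-sensor subsystem $(\mathbf{A}, \mathbf{C}_{\mathbf{s}})$ is linear and Gaussian, its MMSE estimator is the Kalman filter, whose steady-state prediction (resp.\ filtering) error covariance has trace $P_{opt,\mathbf{s}}$ (resp.\ $F_{opt,\mathbf{s}}$) by definition, and this is the minimal asymptotic error achievable from $\mathbf{s}$. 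Hence $tr\left(\E\left(\mathbf{e}(t)\mathbf{e}^T(t)\right)\right) \ge P_{opt,\mathbf{s}}$, with the analogous conclusion $\ge F_{opt,\mathbf{s}}$ in the filtering case.

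The hard part will be making the ``information-free'' claim fully rigorous while respecting causality. I must argue that no (possibly nonlinear) oracle can extract anything from the attacked channels, which follows from their independence of $\left(\{\mathbf{x}(t)\},\{y_d(t)\}_{d\in\mathbf{s}}\right)$, and at the same time that the subtraction $\phi_j(t) = \tilde{v}_j(t) - y_j^{\mathrm{true}}(t)$ does not covertly couple the attack to future process noise or to good-sensor noise. This is precisely where the sensor-wise independence of $\mathbf{v}(t)$ and the strict-past dependence of $\mathbf{x}(t)$ on $\mathbf{w}$ are essential, and I would also need to confirm that the finite-time oracle covariance is bounded below by the steady-state trace $P_{opt,\mathbf{s}}$ (equivalently, that the claimed bound is attained asymptotically by the good-sensor Kalman filter).
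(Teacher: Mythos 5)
Your proposal is correct and takes essentially the same approach as the paper: construct an attack that renders the attacked channels completely uninformative, so the oracle (which knows $\pmb{\kappa}$) is reduced to the optimal Kalman estimator on the good-sensor set $\mathbf{s}$, whose steady-state error trace is $P_{opt,\mathbf{s}}$ (resp.\ $F_{opt,\mathbf{s}}$). The only difference is cosmetic --- the paper's adversary zeroes the attacked outputs via $\phi_j(t) = -\mathbf{c}_j^T \mathbf{x}(t) - v_j(t)$ for $j \in \pmb{\kappa}$, whereas you overwrite them with fresh independent noise --- and your explicit verification of admissibility under (A1)--(A4) is in fact more detailed than the paper's brief argument.
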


\IEEEproof{
Consider the attack scenario where the outputs from all attacked sensors are equal to zero, \emph{i.e.}, the
corruption $\mathbf{\phi}_j(t) = - \mathbf{c}^T_j \mathbf{x}(t) - v_j(t), \; \forall j \in \pmb{\kappa}$.
Hence, the information collected from the attacked sensors cannot enhance the estimation performance. Accordingly, the estimation performance from the remaining sensors is the best 
one can expect to achieve.
}

\par Clearly, for the adversary's \textit{best} choice of $\pmb{\kappa}$, the guarantees given in our achievability match the impossibility bound (in an empirical average sense), and hence, we consider our guarantees \textit{optimal}. We measure the performance of our proposed algorithms in terms of empirical average (and not expectation) since the resultant error in the presence of attacks may not be ergodic.

\section{Secure state estimation: scalar state}
\label{sec:scalar_state}
In this section, we illustrate the main ideas behind our general scheme in the simpler setting of estimating a scalar state variable against a $k$-adversary. In particular, we focus on the state prediction problem for
the system in \eqref{eq:vector_system_model} when the state is a scalar and there are $p \geq  2k+1$ sensors (\emph{i.e.}, $1$-sparse observability condition against $k$-adversary).
For clarifying the presence of scalar terms in our analysis, we use the scalar version (regular instead of bold face) of the notation developed in Section~\ref{sec:setup}, \emph{i.e.}, $x(t)$ for the plant's state, $\hat{x}(t)$ for the estimate, and $y_d(t) = c_d x(t) + v_d(t)$ for the output of a good sensor \mbox{$d \in \{1,2,\ldots p\} - \pmb{\kappa}$}. We first describe our proposed algorithm
for a time window $G =\{t_1, t_1 + 1, \ldots t_1 + N-1 \}$ of size $N$, and then analyze its performance.

\paragraph*{Secure scalar state prediction algorithm}
Considering a time window $G$, Algorithm \ref{alg:scalar} shows the secure state prediction algorithm for the case when the state is a scalar. 
The algorithm runs a bank of $p\choose{p-k}$ Kalman filters in parallel;
one Kalman filter associated with each distinct set of $p-k$ sensors.
For each distinct set $\mathbf{s}$ of $p-k$ sensors, the corresponding Kalman filter fuses all the measurements from these sensors in order to calculate (prediction) estimate $\hat{x}_{\mathbf{s}}(t)$. Using the calculated estimate $\hat{x}_{\mathbf{s}}(t)$, we calculate the individual residues for each sensor as shown in~\eqref{eq:indiv_residual}.
The algorithm, then, exhaustively searches for the set $\mathbf{s}$ of $p-k$ sensors which satisfy the residue test shown in~\eqref{eq:residue_test_scalar}.
If a set $\mathbf{s}^\star$ satisfies the residue test, it is declared \textit{good} and the corresponding Kalman estimate $\hat{x}_{\mathbf{s}^\star}(t)$ is used as the state estimate for the given time window. Intuitively, the residue test checks if the outputs from a given sensor set $\mathbf{s}$ are \textit{consistent} with the corresponding Kalman estimate over the time window $G$.

\begin{algorithm}[t]
\caption{\textsc{Secure State Prediction - scalar case}}
\begin{algorithmic}[1]
\STATE Enumerate all sets $\mathbf{s} \in \mathbf{S} $ such that: \\ \mbox{$\qquad \qquad \mathbf{S} = \{ \mathbf{s} | \mathbf{s} \subset \{1,2, \ldots p\}, \; |\mathbf{s}| = p-k \}$}.
\STATE  For each $\mathbf{s} \in \mathbf{S}$, run a Kalman filter that uses all sensors indexed by $\mathbf{s}$ and returns estimate $\hat{x}_{\mathbf{s}}(t) \in \R$.
\STATE For each $\mathbf{s} \in \mathbf{S}$, calculate the residues for all sensors $d \in \mathbf{s}$ over a time window $G = \{t_1,t_1+1 , \ldots t_1 +N -1 \}$ as:
\begin{align}
r_d(t) = y_d(t) - c_d \hat{x}_{ \mathbf{s}}(t) \qquad \forall d \in \mathbf{s}, \quad \forall t \in G.
\label{eq:indiv_residual}
\end{align}

\STATE  Pick the set $\mathbf{s}^{\star} \in \mathbf{S}$ which satisfies the following residue test:
\begin{align}
\frac{1}{N}\sum_{t \in G} r_d^2(t) \leq c^2_d P_{opt,\mathbf{s}^\star} +\sigma_v^2 + \epsilon \qquad \forall d \in \mathbf{s}^{\star}, \label{eq:residue_test_scalar}
\end{align}
where $\epsilon \geq  0 $ is a design parameter and can be made arbitrarily small for large enough $N$.
\STATE Return $\mathbf{s}^{\star}$ and $\hat{x}(t) := \hat{x}_{\mathbf{s}^\star}(t) \quad \forall t  \in G$.
\end{algorithmic}
\label{alg:scalar}
\end{algorithm}

\paragraph*{Performance analysis}
Consider the set $\mathbf{s}$ of $p-k$ sensors which are not attacked by the $k$-adversary.
Assuming that the Kalman filter corresponding to set $\mathbf{s}$ is in steady state,
it can be shown that $\mathbb{E} \left( r_d^2(t) \right) =  c^2_d P_{opt,\mathbf{s}} +\sigma_v^2$, $\forall d \in \mathbf{s}$ \cite{kailath_book} (where residue $r_d(t)$ is as defined in \eqref{eq:indiv_residual}).
For large enough $N$, due to the (strong) law of large numbers (LLN), the residue test will be satisfied w.h.p. for at least this set of good sensors.
This ensures that w.h.p., the algorithm will not return an empty set. Also, the estimate $\hat{x}_{\mathbf{s}}(t)$ from this set of good sensors trivially achieves the error bound \eqref{eq:sse}.
But, since the algorithm can return any set of size $p-k$ which satisfies the residue test,
it may be possible that some of the sensors in the returned set are corrupt.
In the remainder of our analysis, we show that for \textit{any} set returned by the algorithm, the corresponding Kalman estimate achieves \eqref{eq:sse}.

\par Suppose the algorithm returns a set $\mathbf{s}$ of $p-k$ sensors. There is definitely one good sensor (say sensor $d$) in this set because there can be at most $k$ attacked sensors and $p - k > k$.
Since the residue test is satisfied for this sensor, we have the following constraint:
\allowdisplaybreaks[4]
{\begin{align}
\frac{1}{N} \sum_{t \in G} r_d^2(t) & \stackrel{(a)}= \frac{1}{N} \sum_{t \in G} \left ( c_d x(t) + v_d(t) - c_d \hat{x}_{\mathbf{s}}(t)\right )^2 \nonumber \\
& = \frac{1}{N} \sum_{t \in G} \left ( c_d e(t) + v_d (t) \right )^2 \nonumber\\
& = \frac{c_d^2}{N} \sum_{t \in G}  e^2 (t) + \frac{1}{N} \sum_{t \in G}  v^2_d(t) + \frac{2 c_d}{N} \sum_{t \in G}  e(t) v_d(t) \nonumber \\
& \stackrel{(b)} \leq c_d ^2 P_{opt,\mathbf{s}} +\sigma_v^2 + \epsilon, \label{eq:scalar_step1}
\end{align}}where (a) follows from $y_d(t) = c_d x(t) + v_d(t)$ for a good sensor $d$ and (b) follows from the residue test.
The error $e(t)$ above is the state estimation (prediction) error at time $t$ (in the presence of a $k$-adversary) when $\hat{x}_{\mathbf{s}}(t)$ is used as the state estimate.
Using LLN, we can make an additional simplification as follows.
For any $\epsilon > 0$, there exists a large enough $N$ such that:
\allowdisplaybreaks[4]
{
\begin{align}
& \frac{c_d^2}{N} \sum_{t \in G}  e^2 (t) + \frac{2 c_d}{N} \sum_{t \in G}  e(t) v_d(t) 
 \label{eq:cross_term_scalar} \\
& \stackrel{(a)} \leq c_d^2 P_{opt,\mathbf{s}} +  \left | \sigma_v^2 - \frac{1}{N} \sum_{t \in G} v^2_d(t) \right |  + \epsilon 
 \stackrel{(b)} \leq c_d^2 P_{opt,\mathbf{s}} + 2 \epsilon,   \label{eq:scalar_step2}
\end{align}}where (a) follows from \eqref{eq:scalar_step1},
and (b) follows w.h.p. due to LLN.
Our next step will be to show that the cross term $\frac{2 c_d}{N} \sum_{t \in G} e(t) v_d(t)$ in \eqref{eq:cross_term_scalar} is vanishingly small w.h.p. as $N \rightarrow \infty$;
this leads to the required bound on $\frac{1}{N} \sum_{t\in G} e^2 (t)$ using \eqref{eq:scalar_step2}.
We do so in two steps: first we show that the mean of the cross term $\frac{2 c_d}{N} \sum_{t\in G} e(t) v_d(t)$ is zero and then show that its variance is vanishingly small as $N\rightarrow \infty$.

\par The mean of the cross term $\frac{2 c_d}{N} \sum_{t \in G} e(t) v_d(t)$ can be computed as shown below:
\begin{align}
\mathbb{E} \left( \frac{2c_d}{N} \sum_{t \in G} e(t) v_d(t) \right) \stackrel{(a)} =\frac{2 c_d }{N} \sum_{t \in G} \mathbb{E} \left ( e(t) \right) \mathbb{E} \left( v_d(t) \right) = 0, \label{eq:zero_mean_scalar}
\end{align}
where (a) follows from the independence of $e(t)$ from $v_d(t)$ (due to assumption (A2), $\hat{x}_{\mathbf{s}}(t)$ is independent of good sensor noise $v_d(t)$ despite sensor attacks).
Also, using \eqref{eq:zero_mean_scalar} and taking the expectation in \eqref{eq:cross_term_scalar}:
\begin{align}
 \mathbb{E} \left ( \frac{1}{N} \sum_{t \in G} e^2(t) \right) 
& \leq P_{opt,\mathbf{s}} + \frac{2\epsilon}{c^2_d} .
\label{eq:first_order_bound_c_i}
\end{align}
As the final step in our analysis, we will now show that the variance of cross term $\frac{2 c_d}{N} \sum_{t \in G} e(t) v_d(t)$ is vanishingly small as $N \rightarrow \infty$. For any $\epsilon_1 > 0$, there exists a large enough $N$ such that:
\allowdisplaybreaks[4]
{
\begin{align}
&  \mathbb{E} \left(  \left (\frac{1}{N} \sum_{t \in G} e(t) v_d(t)  \right)^2  \right)  \nonumber \\
&= \frac{ \sum_{t \in G } \mathbb{E} \left(  e^2(t) v^2_d(t) \right)}{N^2}  + \frac{2}{N^2} \sum_{t,\;t' \in G, \; t < t' } \mathbb{E} \left(  e(t) v_d(t) e(t') v_d(t') \right) \nonumber \\
& \stackrel{(a)}= \frac{1}{N^2} \sum_{t \in G} \mathbb{E} \left( e^2(t) \right) \mathbb{E} \left( v^2_d(t) \right)
\nonumber\\
& \quad + \frac{2}{N^2} \sum_{t,\; t' \in G, \; t < t' } \mathbb{E} \left( e(t) v_d(t) e(t') \right) \mathbb{E} \left( v_d(t') \right)  \nonumber \\
& = \frac{\sigma_v^2}{N} \mathbb{E} \left( \frac{\sum_{t \in G } e^2(t)}{N} \right) \stackrel{(b)} \leq 
\epsilon_1 ,
\end{align}}where (a) follows from the independence of $e(t)$ from $v_d(t)$ and the independence of $v_d(t')$ from $e(t) v_d(t) e(t') $ (for $t' > t$), (b) follows from \eqref{eq:first_order_bound_c_i}.
The above result implies that the cross term $\frac{2 c_d}{N} \sum_{t \in G} e(t) v_d(t)$ (with zero mean) has vanishingly small variance as $N \rightarrow \infty$.
As a result, using Chebyshev's inequality and \eqref{eq:scalar_step2}, we have the error bound \eqref{eq:sse}.

\section{Secure state estimation: vector state}  \label{sec:vector_state_prediction}
In this section, we consider the state estimation problem (against a $k$-adversary) for the general linear dynamical system described in \eqref{eq:vector_system_model}, when the state is a vector.
We focus on the prediction problem in this section; the filtering problem is studied in Appendix~\ref{sec:filtering}.
We assume that the system is \mbox{$\theta$-sparse} observable such that it satisfies the sparse observability condition \eqref{eq:theta_condition} against a $k$-adversary. We first introduce some additional notation required for our proposed algorithm.

\paragraph*{Additional notation} Consider a set $\mathbf{s}$ of $p-k$ sensors. Such a set has ${p-k}\choose{\theta}$ sensor subsets of size $\theta$, and we index these subsets of $\mathbf{s}$ by $i$. Due to the $\theta$-sparse observability condition, each subset $i$ forms an observable pair $\left ( \mathbf{A} , \mathbf{C}_i\right)$ with observability matrix $\mathbf{O}_i$ and observability index $\mu_i$;
$\mathbf{C}_i$ is formed by rows of $\mathbf{C}$ corresponding to subset $i$ of $\mathbf{s}$.
We define matrices $\mathbf{J}_i$ and $\mathbf{M}_{\mu_i}$ as shown below:
\begin{align}
\mathbf{J}_i & = \begin{bmatrix} \mathbf{0} & \mathbf{0}  & \hdots & \mathbf{0} \\
\mathbf{C}_i & \mathbf{0}  & \hdots & \mathbf{0} \\
\mathbf{C}_i \mathbf{A} & \mathbf{C}_i & \hdots & \mathbf{0} \\
\vdots & \vdots & \ddots & \vdots \\
\mathbf{C}_i \mathbf{A}^{{\mu_i}-2} & \mathbf{C}_i \mathbf{A}^{{\mu_i}-3 } & \hdots & \mathbf{C}_i \\
\end{bmatrix}, 
\; \mathbf{M}_{\mu_i} =  \sigma^2_w \mathbf{J}_i \mathbf{J}^T_i + \sigma^2_v \mathbf{I}_{\mu_i}.
\end{align}
The pseudo-inverse of $\mathbf{O}_i$ is denoted by $\mathbf{O}_i^{\dagger}$.
The output from sensor subset $i$ (of size $\theta$) at time $t$ is denoted by $\mathbf{y}_i(t) \in \mathbb{R}^{\theta}$.
We consider the state estimation problem for a time window $G$ of size $N$ and assume without loss of generality that $\mu_i$ divides $N$ such that $\mu_i N_B=  N$. 

\paragraph*{Secure state prediction algorithm}

\begin{algorithm}[t]
\caption{\textsc{Secure State Prediction - vector case}}
\begin{algorithmic}[1]
\STATE Enumerate all sets $\mathbf{s} \in \mathbf{S} $ such that:\\ \mbox{$\qquad \qquad \mathbf{S} = \{ \mathbf{s} | \mathbf{s} \subset \{1,2, \ldots p\}, \; |\mathbf{s}| = p-k \}$}.
\STATE  For each $\mathbf{s} \in \mathbf{S}$, run a Kalman filter that uses all sensors indexed by $\mathbf{s}$ and returns estimate $\hat{\mathbf{x}}_{\mathbf{s}}(t) \in \R^n$.
\STATE For each set $\mathbf{s} \in \mathbf{S}$, enumerate all subsets of size $\theta$ and index them by $i$. Let $\mu_i$ be the observability index associated with sensor subset $i$.
For each subset $i$ of $\mathbf{s}$ (subset of size $\theta$), calculate the \textit{block} residue:
$$ \mathbf{r}_i(t) = \begin{bmatrix} \mathbf{y}_i(t) \\ \mathbf{y}_i(t+1) \\ \vdots \\ \mathbf{y}_i(t+\mu_i -1 ) \end{bmatrix} - \mathbf{O}_i \hat{\mathbf{x}}_{\mathbf{s}}(t)\quad  \forall t \in G.$$
\STATE 
Pick the set $\mathbf{s}^{\star} \in \mathbf{S}$ which satisfies the following block residue test for
each subset $i$ of $\mathbf{s}^\star$ (subset of size $\theta$).
Partition $G$ into $\mu_i$ groups $G_0, G_1, \ldots G_{{\mu_i}-1}$ of size $N_B$ such that $G_l = \{ t |  \left( \left( t-t_1 \right) \; mod \; {\mu_i} \right) = l \}$ and check that for each $G_l$:
\begin{align}
& \frac{1}{N_B} \sum_{t \in G_l}  tr \left(   \mathbf{O}^{\dagger}_i  \mathbf{r}_i(t)  \mathbf{r}^T_i(t) \mathbf{O}^{\dagger T}_i  \right)  \nonumber \\
& \leq  P_{opt, \mathbf{s}^\star} + tr \left( \mathbf{O}^{\dagger}_i \mathbf{M}_{\mu_i} \mathbf{O}^{\dagger T}_i \right) + \epsilon, \label{eq:block_residue_test_prediction}
\end{align}
where $\epsilon \geq 0 $ is a design parameter which can be made arbitrarily small for large enough $N_B$.

\STATE Return $\mathbf{s}^{\star}$ and $\hat{\mathbf{x}}(t) := \hat{\mathbf{x}}_{\mathbf{s}^\star}(t) \quad \forall t  \in G$.
\end{algorithmic}
\label{alg:vector_prediction}
\end{algorithm} 
Similar to the scalar setting, Algorithm \ref{alg:vector_prediction} runs a bank of $p\choose{p-k}$ Kalman filters in parallel.
For each distinct set $\mathbf{s}$ of $p-k$ sensors, the corresponding Kalman filter fuses all the measurements from these sensors in order to calculate an estimate $\hat{\mathbf{x}}_{\mathbf{s}}(t)$.
For a sensor set $\mathbf{s}$ of size $p-k$ to satisfy the block residue test, each of its $p-k \choose \theta$ subsets should satisfy \eqref{eq:block_residue_test_prediction} for each group $G_l$.
If a set $\mathbf{s}^\star$ satisfies the residue test, it is declared good and the corresponding Kalman estimate $\hat{\mathbf{x}}_{\mathbf{s}^\star}(t)$ is used as the state estimate for the given time window.
Intuitively, the residue test checks if the outputs from every \textit{observable} sensor subset of size $\theta$ within set $\mathbf{s}$ are \textit{consistent} with the corresponding Kalman estimate over the time window $G$. We analyze the performance of Algorithm~\ref{alg:vector_prediction} in Appendix~\ref{sec:vector_prediction_analysis}.
\section{Sparse observability: Coding theoretic view} \label{sec:sparse_proof}
In this section, we revisit the sparse observability condition \eqref{eq:theta_condition} against a $k$-adversary and give a coding theoretic interpretation for the same.
We first describe our interpretation for a linear system,
and then discuss how it can be generalized for non-linear systems.

\par Consider the linear dynamical system in \eqref{eq:vector_system_model} without the process and sensor noise (\emph{i.e.}, $\mathbf{x}\left(t+1\right) = \mathbf{Ax}(t), \; \mathbf{y}(t) = \mathbf{C} \mathbf{x}(t)+ \pmb{\phi}(t)$).
If the system's initial state is $\mathbf{x}(0) \in \mathbb{R}^n$ and the system is $\theta$-sparse observable, then clearly in the absence of sensor attacks, by observing the outputs from any $\theta$ out of $p$ sensors for $n$ time instants ($t =0,1,\ldots  n-1$) we can exactly recover $\mathbf{x}(0)$ and hence, \textit{exactly} estimate the state of the plant.
A coding theoretic view of this can be given as follows.
Consider the outputs from sensor $d \in \{1,2,\ldots p\}$ for $n$ time instants as a symbol $\pazocal{Y}_d \in \mathbb{R}^n$.
Thus, in the (symbol) observation vector $\pazocal{Y}= \begin{bmatrix}\pazocal{Y}_1 & \pazocal{Y}_2  \ldots \pazocal{Y}_p \end{bmatrix}$,
due to $\theta$-sparse observability,
any $\theta$ symbols are sufficient (in the absence of attacks) to recover the initial state $\mathbf{x}(0)$.
Now, let us consider the case of a $k$-adversary which can arbitrarily corrupt any $k$ sensors.
In the coding theoretic view, this corresponds to arbitrarily corrupting any $k$ (out of $p$) symbols in the observation vector.
Intuitively,
based on the relationship between error correcting codes and the Hamming distance between codewords in classical coding theory \cite{blahut}, one can expect the recovery of the initial state despite such corruptions to depend on the (symbol) Hamming distance
between the observation vectors corresponding to two distinct initial states (say $\mathbf{x}^{(1)}(0)$ and $\mathbf{x}^{(2)}(0)$ with $\mathbf{x}^{(1)}(0) \neq \mathbf{x}^{(2)}(0)$).
In this context,
the following lemma relates $\theta$-sparse observability
to the minimum Hamming distance between observation vectors in the absence of attacks;
this leads to a (tight) bound on
the number of attacked sensors that can be tolerated for state estimation.

\begin{lemma} \label{lemma:distance}
For a $\theta$-sparse observable system with $p$ sensors, the minimum (symbol) Hamming distance between observation vectors corresponding to distinct initial states is $p-\theta+1$. 
\end{lemma}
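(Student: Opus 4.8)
The plan is to recast the (symbol) Hamming distance as an algebraic statement about the null spaces of the per-sensor observability matrices, and then extract the value $p-\theta+1$ directly from the sparse observability condition. First I would fix two distinct initial states $\mathbf{x}^{(1)}(0) \neq \mathbf{x}^{(2)}(0)$ and set $\mathbf{z} = \mathbf{x}^{(1)}(0) - \mathbf{x}^{(2)}(0) \neq \mathbf{0}$. In the noiseless, attack-free setting the symbol observed at sensor $d$ over $n$ time instants is $\pazocal{Y}_d = \mathbf{O}_d\,\mathbf{x}(0)$, where $\mathbf{O}_d = \bigl[\,\mathbf{c}_d^T;\ \mathbf{c}_d^T\mathbf{A};\ \ldots;\ \mathbf{c}_d^T\mathbf{A}^{n-1}\,\bigr]$ is the single-sensor observability matrix of the pair $(\mathbf{A}, \mathbf{c}_d^T)$. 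Hence the two symbols at sensor $d$ coincide if and only if $\mathbf{O}_d \mathbf{z} = \mathbf{0}$, i.e. iff $\mathbf{z} \in \kr \mathbf{O}_d$. Writing $Z(\mathbf{z}) = \{\, d : \mathbf{O}_d \mathbf{z} = \mathbf{0}\,\}$ for the set of \emph{agreeing} sensors, the Hamming distance between the two observation vectors is exactly $p - |Z(\mathbf{z})|$, so the minimum distance equals $p - \max_{\mathbf{z} \neq \mathbf{0}} |Z(\mathbf{z})|$. It therefore suffices to establish $\max_{\mathbf{z} \neq \mathbf{0}} |Z(\mathbf{z})| = \theta - 1$.

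For the upper bound I would argue by contradiction: if some nonzero $\mathbf{z}$ had $|Z(\mathbf{z})| \geq \theta$, then picking any size-$\theta$ subset $\mathbf{s} \subseteq Z(\mathbf{z})$ and stacking rows gives $\mathbf{O}_{\mathbf{s}} \mathbf{z} = \mathbf{0}$ with $\mathbf{z} \neq \mathbf{0}$, so $(\mathbf{A}, \mathbf{C}_{\mathbf{s}})$ is not observable, contradicting $\theta$-sparse observability. Thus $|Z(\mathbf{z})| \leq \theta - 1$ for every nonzero $\mathbf{z}$, which yields minimum distance $\geq p - \theta + 1$.

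For the matching lower bound I would exploit the minimality of $\theta$. Since $\theta$ is the \emph{smallest} integer for which every size-$\theta$ subset is observable, the property must fail at size $\theta - 1$, so there is a subset $\mathbf{s}'$ with $|\mathbf{s}'| = \theta - 1$ for which $(\mathbf{A}, \mathbf{C}_{\mathbf{s}'})$ is unobservable. Its unobservable subspace $\kr \mathbf{O}_{\mathbf{s}'}$ is then nontrivial, so I can select a nonzero $\mathbf{z}$ with $\mathbf{O}_{\mathbf{s}'} \mathbf{z} = \mathbf{0}$, forcing $\mathbf{s}' \subseteq Z(\mathbf{z})$ and hence $|Z(\mathbf{z})| \geq \theta - 1$. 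Combined with the upper bound this gives $|Z(\mathbf{z})| = \theta - 1$, exhibiting a pair of initial states at Hamming distance exactly $p - \theta + 1$ and finishing the proof.

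The conceptual heart of the argument — the one step that is not bookkeeping — is the equivalence ``the symbols produced by two initial states at a sensor coincide $\iff$ their difference is unobservable to that sensor,'' which turns a combinatorial distance into the algebraic question of how many single-sensor null spaces a fixed direction $\mathbf{z}$ can simultaneously inhabit. The main obstacle I anticipate is the tightness (lower bound) direction: the inequality $|Z(\mathbf{z})| \leq \theta - 1$ is immediate, but producing a $\mathbf{z}$ that actually attains $\theta - 1$ rests on interpreting ``$\theta$ minimal'' correctly, which in turn uses the monotonicity observation that a superset of an observable sensor set is observable (so that failure of the property at size $\theta - 1$ is equivalent to the existence of a single unobservable subset of that size).
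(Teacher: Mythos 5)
Your proof is correct and follows essentially the same route as the paper's: your upper bound (at most $\theta-1$ agreeing symbols, since agreement on any $\theta$ symbols together with observability of that subset forces equal initial states) and your tightness step (minimality of $\theta$ yields an unobservable subset of size $\theta-1$, whose nontrivial null space furnishes a pair of states agreeing on exactly those sensors) are precisely the paper's two steps, including its footnote argument. The only difference is presentational: you make the linear-algebraic content explicit (difference vector $\mathbf{z}$, kernels of stacked observability matrices, monotonicity of observability under adding sensors), whereas the paper states the same facts in terms of recoverability of the initial state.
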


\begin{proof}
Consider observation vectors $\pazocal{Y}^{(1)}$ and $\pazocal{Y}^{(2)}$ corresponding to distinct initial states $\mathbf{x}^{(1)}(0)$ and $\mathbf{x}^{(2)}(0)$. 
Due to $\theta$-sparse observability,
at most $\theta-1$ symbols in $\pazocal{Y}^{(1)}$ and $\pazocal{Y}^{(2)}$ can be identical;
if any $\theta$ of the symbols are identical, this would imply $\mathbf{x}^{(1)}(0) = \mathbf{x}^{(2)}(0)$.
Hence, the (symbol) Hamming distance between the observation vectors
$\pazocal{Y}^{(1)}$ and $\pazocal{Y}^{(2)}$
(corresponding to $\mathbf{x}^{(1)}(0)$ and $\mathbf{x}^{(2)}(0)$)
is at least $p-(\theta-1) = p - \theta +1$ symbols.
Furthermore, there exists a pair of initial states $\left( \mathbf{x}^{(1)}(0), \mathbf{x}^{(2)}(0)\right)$,
such that the corresponding observation vectors $\pazocal{Y}^{(1)}$ and $\pazocal{Y}^{(2)}$ are identical in exactly $\theta-1$ symbols\footnote{If there is no such pair of initial states,
the initial state can be recovered by observing any $\theta-1$ sensors.
By definition, in a $\theta$-sparse observable system, $\theta$ is the smallest positive integer, such that the initial state can be recovered by observing any $\theta$ sensors.}
and differ in the rest $p-\theta+1$ symbols.
Hence, the minimum (symbol) Hamming distance between the observation vectors is  $p-\theta+1$.
\end{proof}

\par The above lemma connects the problem of
state estimation with sensor attacks in a dynamical system
to error correction in classical coding theory.
Since the minimum Hamming distance between the observation vectors corresponding to
distinct initial states is $p-\theta+1$, we can correct up to $k < \frac{p-\theta+1}{2}$ sensor corruptions;
this is equivalent to the condition $\theta \leq p-2k$,
which is precisely the sparse observability condition required against a $k$-adversary\footnote{In addition, since the minimum Hamming distance is $p-\theta+1$, we can detect attacks up to $(p-\theta+1) - 1 = p-\theta$ sensor corruptions.}.
It should be noted that a $k$-adversary can attack \textit{any} set of $k$ (out of $p$) sensors,
and the condition $k < \frac{p-\theta+1}{2}$ is both necessary and sufficient for exact state estimation despite such attacks.
When $k \geq \frac{p-\theta+1}{2}$,
it is straightforward to show a scenario where the observation vector (after attacks) can be explained by multiple initial states,
and hence exact state estimation is not possible.
The following example illustrates such an attack scenario in view of the coding theoretic interpretation discussed above.

\begin{example} \label{example:coding_interpretation}
Consider a $\theta$-sparse observable system with $\theta=2$, number of sensors $p=5$, and a $k$-adversary with $k=2$.
Clearly, the condition $k < \frac{p-\theta+1}{2}$ is not satisfied in this example.
Let $\mathbf{x}^{(1)}(0)$ and $\mathbf{x}^{(2)}(0)$ be distinct initial states,
such that the corresponding observation vectors $\pazocal{Y}^{(1)}$ and $\pazocal{Y}^{(2)}$ have (minimum) Hamming distance $p-\theta+1 = 4$ symbols.
Figure~\ref{fig:coding_example} depicts the observation vectors $\pazocal{Y}^{(1)}$ and $\pazocal{Y}^{(2)}$, and for the sake of this example,
we assume that the observation vectors have the same first symbol (\emph{i.e.}, $\pazocal{Y}^{(1)}_1 = \pazocal{Y}_1^{(2)} = \pazocal{Y}_1$) and differ in the rest $4$ symbols
(hence, a Hamming distance of $4$).
\begin{figure}[!ht]
\begin{center}
\includegraphics[scale=0.65]{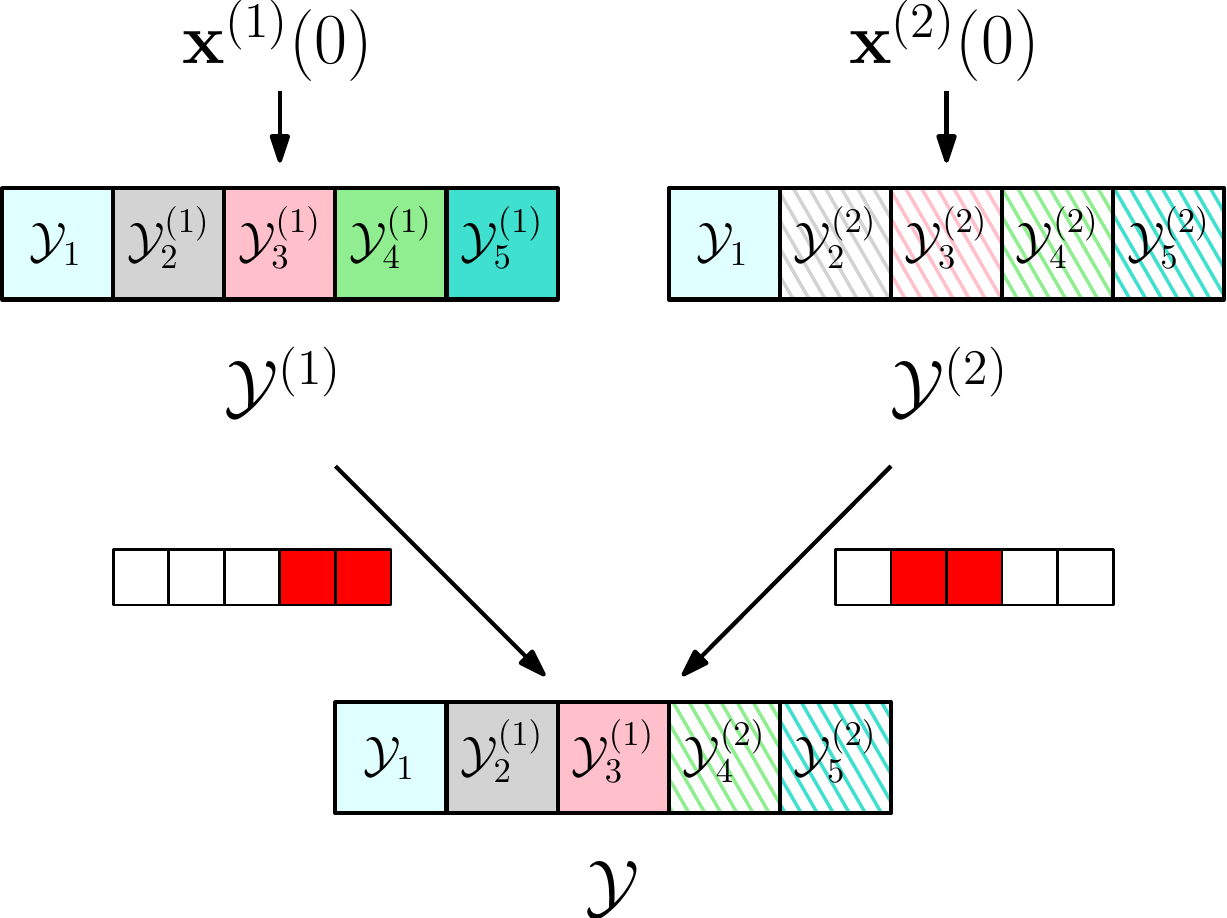}
\caption{Example with $\theta = 2$, $p=5$ and $k=2$.
For distinct initial states $\mathbf{x}^{(1)}(0)$ and $\mathbf{x}^{(2)}(0)$, the corresponding observation vectors are $\pazocal{Y}^{(1)}$ and $\pazocal{Y}^{(2)}$.
Both $\pazocal{Y}^{(1)}$ and $\pazocal{Y}^{(2)}$ have the same first symbol, but differ in the rest four symbols. Given (attacked) observation vector $\pazocal{Y} = \left [ \pazocal{Y}_1 \; \;  \pazocal{Y}_2^{(1)} \; \; \pazocal{Y}_3^{(1)} \; \;  \pazocal{Y}_4^{(2)} \; \; \pazocal{Y}_5^{(2)} \right]$, there are two possibilities for the initial state: (a) $\mathbf{x}^{(1)}(0)$ with attacks on sensors
$4$ and $5$, or (b) $\mathbf{x}^{(2)}(0)$ with attacks on sensors
$2$ and $3$.}
\label{fig:coding_example}
\end{center}
\end{figure}
Now, as shown in Figure~\ref{fig:coding_example}, suppose the observation vector after attacks was
$\pazocal{Y} = \left [ \pazocal{Y}_1 \; \;  \pazocal{Y}_2^{(1)} \; \; \pazocal{Y}_3^{(1)} \; \;  \pazocal{Y}_4^{(2)} \; \; \pazocal{Y}_5^{(2)} \right]$.
Clearly, there are two possible explanations for this (attacked) observation vector: (a) the initial state was $\mathbf{x}^{(1)}(0)$ and sensors $4$ and $5$ were attacked,
or (b) the initial state was $\mathbf{x}^{(2)}(0)$ and sensors $2$ and $3$ were attacked. Since there are two possibilities,
we cannot estimate the initial state exactly given the attacked observation vector.
This example can be easily generalized to show the necessity of the condition $k < \frac{p-\theta+1}{2}$.
\end{example}

\par For (noiseless) non-linear systems, by analogously defining $\theta$-sparse observability, the same coding theoretic interpretation holds.
Hence, this leads to an alternative proof for the necessary and sufficient conditions for secure state estimation in any noiseless dynamical system.

\vspace{-1mm}

\bibliographystyle{IEEEtran}
\bibliography{bibliography2} 


\appendix

\subsection{Algorithm~\ref{alg:vector_prediction}: performance analysis} \label{sec:vector_prediction_analysis}
In this section,
we analyze the performance of Algorithm~\ref{alg:vector_prediction}.
Similar to the analysis done for the scalar setting in Section~\ref{sec:scalar_state},
we first derive a bound using LLN, and then analyze the \textit{cross term} in the bound to obtain final guarantees on the state estimation error in the presence of attacks.
The details of the analysis are described below.

\par Consider the set $\mathbf{s}$ of $p-k$ sensors which are not attacked by the $k$-adversary.
For such a set $\mathbf{s}$,
the block residue $\mathbf{r}_i(t)$ for a subset $i$ of $\mathbf{s}$ (subset of size $\theta$) can be expressed as shown below:
\begin{align}
 \mathbf{r}_i(t)  
 &= \begin{bmatrix} \mathbf{y}_i(t) \\ \vdots \\ \mathbf{y}_i(t+{\mu_i} -1 ) \end{bmatrix} - \mathbf{O}_i \hat{\mathbf{x}}_{\mathbf{s}}(t) \nonumber \\
& = \mathbf{O}_i \mathbf{x}(t) 
+ \mathbf{J}_i\begin{bmatrix} \mathbf{w}(t) \\ \mathbf{w}(t+1) \\ \vdots \\ \mathbf{w}(t+ {\mu_i} -2) \end{bmatrix}
+ \begin{bmatrix} \mathbf{v}_i(t) \\ \mathbf{v}_i (t+1) \\ \vdots \\ \mathbf{v}_i(t + {\mu_i} -1)\end{bmatrix}   \nonumber  \\ & \quad - \mathbf{O}_i \hat{\mathbf{x}}_{\mathbf{s}}(t)
\nonumber \\
& =\mathbf{O}_i \left (  \mathbf{x}(t) -\hat{\mathbf{x}}_{\mathbf{s}}(t)  \right) + \left ( \mathbf{J}_i \mathbf{w}_{t:t+{\mu_i}-2} + \mathbf{v}_{i,t:t+{\mu_i}-1} \right ) \label{eq:vi_def} \\
& =  \mathbf{O}_i  \left(  \mathbf{x}(t) -\hat{\mathbf{x}}_{\mathbf{s}}(t)  \right) 
+ \mathbf{z}_{i,t:t+{\mu_i}-1},
\end{align}
and assuming that the Kalman filter corresponding to sensor set $\mathbf{s}$ is in steady state: 
\begin{align}
\mathbb{E} \left( tr\left( \mathbf{O}^{\dagger}_i \mathbf{r}_i(t) \mathbf{r}^T_i(t) \mathbf{O}^{\dagger T}_i\right) \right)
& \stackrel{(a)}= P_{opt,\mathbf{s}}  + tr \left( \mathbf{O}^{\dagger }_i \mathbf{M}_{\mu_i} \mathbf{O}^{\dagger T}_i \right),  \nonumber
\end{align}where (a) follows from $\mathbf{M}_{\mu_i} = \mathbb{E} \left ( \mathbf{z}_{i,t:t+{\mu_i}-1} \mathbf{z}^T_{i,t:t+{\mu_i}-1} \right)  = \sigma^2_w \mathbf{J}_i \mathbf{J}^T_i + \sigma^2_v \mathbf{I}_{\mu_i }$.
Hence, due to LLN, the block residue test \eqref{eq:block_residue_test_prediction} will be satisfied w.h.p. for at least this set of good sensors and w.h.p. the algorithm will not return an empty set.
Also, the estimate $\hat{\mathbf{x}}_{\mathbf{s}}(t)$ from this set of good sensors trivially satisfies the error bound \eqref{eq:sse}.
But, since the algorithm can return any set of size $p-k$ which satisfies the block residue test,
it may be possible that some of the sensors in the returned set are corrupt.
In the remainder of our analysis, we show that for \textit{any} set returned by the algorithm, the corresponding Kalman estimate achieves the error bound \eqref{eq:sse}.

\par Suppose the algorithm returns a set $\mathbf{s}$ of $p-k$ sensors.
Since $ \theta \leq p -2k$ (sparse observability condition), there exists a subset of $\theta$ good sensors in $\mathbf{s}$. The following can be inferred when the
block residue test \eqref{eq:block_residue_test_prediction} is satisfied for such a subset $i$ (of size $\theta$):
{\allowdisplaybreaks[4]
\begin{align}
& tr \left ( \frac{1}{N_B} \sum_{t \in G_l} \mathbf{O}^{\dagger}_i \mathbf{r}_i(t) \mathbf{r}^T_i(t) \mathbf{O}^{\dagger T}_i \right) \nonumber \\
&\stackrel{(a)}= \frac{1}{N_B} \sum_{t \in G_l} \mathbf{e}^T(t)\mathbf{e}(t)
+ \frac{1}{N_B} \sum_{t \in G_l} tr\left ( \mathbf{O}^{\dagger }_i\mathbf{z}_{i,t:t+{\mu_i}-1} \mathbf{z}^T_{i,t:t+{\mu_i}-1} \mathbf{O}^{\dagger T}_i\right ) \nonumber \\ 
& \quad 
+ \frac{2}{N_B} \sum_{t \in G_l} \mathbf{e}^T(t) \mathbf{O}^{\dagger }_i\mathbf{z}_{i,t:t+{\mu_i}-1}
\nonumber \\
& \stackrel{(b)}\leq P_{opt,\mathbf{s}}  + tr\left ( \mathbf{O}^{\dagger}_i \mathbf{M}_{\mu_i} \mathbf{O}^{\dagger T}_i \right) + \epsilon,
\end{align}}where $\mathbf{e}(t)$ in (a) is the state estimation error at time $t$ (in the presence of a $k$-adversary) when $\hat{\mathbf{x}}_{\mathbf{s}}(t)$ is used as the state estimate, and (b) follows from the block residue test \eqref{eq:block_residue_test_prediction}.
Using (a) and (b) above, for any $\epsilon > 0$ there exists a large enough $N_B$ such that:
\begin{align}
& \frac{1}{N_B} \sum_{t \in G_l} \mathbf{e}^T(t)\mathbf{e}(t)
+ \frac{2}{N_B} \sum_{t \in G_l} \mathbf{e}^T(t) \mathbf{O}^{\dagger }_i\mathbf{z}_{i,t:t+{\mu_i}-1} \label{eq:cross_term_vector_prediction} \\
& \leq    \left | tr\left ( \mathbf{O}^{\dagger }_i \mathbf{M}_{\mu_i} \mathbf{O}^{\dagger T}_i \right) - \frac{1}{N_B} \sum_{t \in G_l} tr\left ( \mathbf{O}^{\dagger }_i\mathbf{z}_{i,t:t+{\mu_i}-1} \mathbf{z}^T_{i,t:t+{\mu_i}-1} \mathbf{O}^{\dagger T}_i\right ) \right| \nonumber\\ & \quad  + 
P_{opt,\mathbf{s}} 
+ \epsilon \nonumber \\
& \stackrel{(c)}\leq  P_{opt, \mathbf{s}}  + 2 \epsilon, \label{eq:vector_state_prediction_bound1}
\end{align}
where (c) follows w.h.p. from LLN; for different time indices in $G_l$, $tr\left ( \mathbf{O}^{\dagger }_i\mathbf{z}_{i,t:t+{\mu_i}-1} \mathbf{z}^T_{i,t:t+{\mu_i}-1} \mathbf{O}^{\dagger T}_i\right )$ corresponds to i.i.d. realizations of the same random variable. Along the lines of the analysis done in the scalar setting in Section~\ref{sec:scalar_state},
we can show that the cross term $ \frac{2}{N_B} \sum_{t \in G_l} \mathbf{e}^T(t) \mathbf{O}^{\dagger }_i\mathbf{z}_{i,t:t+{\mu_i}-1}$ in \eqref{eq:cross_term_vector_prediction} has zero mean and vanishingly small variance as $N_B \rightarrow \infty$; this leads to the required bound on $\frac{1}{N} \sum_{t \in G} \mathbf{e}^T(t)\mathbf{e}(t)$. To complete our analysis we calculate the mean and the variance of the cross term $ \frac{2}{N_B} \sum_{t \in G_l} \mathbf{e}^T(t) \mathbf{O}^{\dagger }_i\mathbf{z}_{i,t:t+{\mu_i}-1}$ as shown below.

\par The mean of $ \frac{2}{N_B} \sum_{t \in G_l} \mathbf{e}^T(t) \mathbf{O}^{\dagger }_i\mathbf{z}_{i,t:t+{\mu_i}-1}$ can be computed as shown below:
\begin{align}
& \mathbb{E} \left ( \frac{2}{N_B} \sum_{t \in G_l} \mathbf{e}^T(t)  \mathbf{O}^{\dagger}_i\mathbf{z}_{i,t:t+{\mu_i}-1}   \right)  \nonumber \\
& 
\stackrel{(a)}=  \frac{2}{N_B} \sum_{t \in G_l} \mathbb{E} \left ( \mathbf{e}^T(t)  \right)  \mathbb{E} \left (  \mathbf{O}^{\dagger}_i\mathbf{z}_{i,t:t+{\mu_i}-1} \right ) = 0, \label{eq:zero_mean_vector_prediction}
\end{align}where (a) follows from the independence of $\mathbf{e}(t)$ from $\mathbf{z}_{i,t:t+{\mu_i}-1}$.
This is true since both $\mathbf{x}(t)$ and $\hat{\mathbf{x}}_{\mathbf{s}}(t)$ are
independent\footnote{The adversary's corruptions till time $t-1$ can influence $\hat{\mathbf{x}}_{\mathbf{s}}(t)$ which is based on outputs till time $t-1$. Due to assumption (A1), the adversary's corruptions till time $t-1$ are independent of $\mathbf{w}(t)$ and hence  $\hat {\mathbf{x}}_{\mathbf{s}}(t)$ is independent of $\mathbf{w}(t)$. Also, $\mathbf{x}(t)$ is independent of $\mathbf{w}(t)$. Due to assumption (A2), $\hat{\mathbf{x}}_{\mathbf{s}}(t)$ is independent of $\mathbf{v}_i(t)$.}
of $\mathbf{w}(t)$ and $\mathbf{v}_i(t)$.
Also, using \eqref{eq:zero_mean_vector_prediction}
and taking the expectation in \eqref{eq:cross_term_vector_prediction}:
\begin{align}
 \mathbb{E} \left ( \frac{1}{N_B} \sum_{t \in G_l} \mathbf{e}^T(t)\mathbf{e}(t) \right ) &  \leq P_{opt, \mathbf{s}}  + 2\epsilon \label{eq:expectation_bound_general_case_prediction}.
\end{align}
\par Now, we will show that the variance of the cross term $\frac{2}{N_B} \sum_{t \in G_l} \mathbf{e}^T(t) \mathbf{O}^{\dagger}_i\mathbf{z}_{i,t:t+{\mu_i}-1} $ is vanishingly small as $N_B \rightarrow \infty$. For any $\epsilon_1 > 0$, there exists a large enough $N_B$ such that:
{\allowdisplaybreaks[4]
\begin{align}
& Var \left( \frac{1}{N_B} \sum_{t \in G_l} \mathbf{e}^T(t) \mathbf{O}^{\dagger}_i\mathbf{z}_{i,t:t+{\mu_i}-1} \right) \nonumber \\
& = \mathbb{E} \left ( \left( \frac{1}{N_B} \sum_{t \in G_l} \mathbf{e}^T(t) \mathbf{O}^{\dagger}_i\mathbf{z}_{i,t:t+{\mu_i}-1} \right)^2 \right)  \nonumber \\ & \quad - \left (\mathbb{E}\left( \frac{1}{N_B} \sum_{t \in G_l} \mathbf{e}^T(t) \mathbf{O}^{\dagger}_i\mathbf{z}_{i,t:t+{\mu_i}-1} \right) \right)^2 \nonumber \\
& \stackrel{(a)}= \mathbb{E} \left ( 
\left( \frac{1}{N_B} \sum_{t \in G_l} \mathbf{e}^T(t) \mathbf{O}^{\dagger}_i\mathbf{z}_{i,t:t+{\mu_i}-1} \right)^2 \right)  \nonumber \\
& = \mathbb{E}\left( \frac{1}{N^2_B} \sum_{t \in G_l} \mathbf{e}^T(t) \mathbf{O}^{\dagger}_i\mathbf{z}_{i,t:t+{\mu_i}-1} \mathbf{e}^T(t) \mathbf{O}^{\dagger}_i\mathbf{z}_{i,t:t+{\mu_i}-1} \right) \nonumber \\
& \quad + \mathbb{E}\left( \frac{2}{N^2_B} \sum_{t,t' \in G_l, \; t < t'} \mathbf{e}^T(t) \mathbf{O}^{\dagger}_i\mathbf{z}_{i,t:t+{\mu_i}-1} \mathbf{e}^T(t') \mathbf{O}^{\dagger}_i\mathbf{z}_{i,t':t'+{\mu_i}-1}   \right) \nonumber\\
& \stackrel{(b)} = \mathbb{E}\left( \frac{1}{N^2_B} \sum_{t \in G_l} \mathbf{e}^T(t) \mathbf{O}^{\dagger}_i\mathbf{z}_{i,t:t+{\mu_i}-1} \mathbf{e}^T(t) \mathbf{O}^{\dagger}_i\mathbf{z}_{i,t:t+{\mu_i}-1} \right) +  \nonumber \\
& \quad  \frac{2}{N^2_B} \sum_{t,t' \in G_l, \; t < t'} \mathbb{E}\left( \mathbf{e}^T(t) \mathbf{O}^{\dagger}_i \mathbf{z}_{i,t:t+{\mu_i}-1} \mathbf{e}^T(t') \mathbf{O}^{\dagger}_i \right)
\mathbb{E}\left( \mathbf{z}_{i,t':t'+{\mu_i}-1} \right) \nonumber\\
& \stackrel{(c)}= \mathbb{E}\left( \frac{1}{N^2_B} \sum_{t \in G_l} \mathbf{e}^T(t) \mathbf{O}^{\dagger}_i\mathbf{z}_{i,t:t+{\mu_i}-1} \mathbf{e}^T(t) \mathbf{O}^{\dagger}_i\mathbf{z}_{i,t:t+{\mu_i}-1} \right) \nonumber\\
& = \mathbb{E}\left( \frac{1}{N^2_B} \sum_{t \in G_l} \mathbf{e}^T(t) \mathbf{O}^{\dagger}_i\mathbf{z}_{i,t:t+{\mu_i}-1} \left( \mathbf{O}^{\dagger}_i\mathbf{z}_{i,t:t+{\mu_i}-1} \right)^T \mathbf{e}(t) \right) \nonumber\\
& \stackrel{(d)}= \mathbb{E} \left( \frac{1}{N^2_B} \sum_{t \in G_l} tr \left( \mathbf{e}^T(t) \mathbf{O}^{\dagger}_i\mathbf{z}_{i,t:t+{\mu_i}-1} \left( \mathbf{O}^{\dagger}_i\mathbf{z}_{i,t:t+{\mu_i}-1} \right)^T \mathbf{e}(t) \right) \right) \nonumber\\
& = \mathbb{E} \left( \frac{1}{N^2_B} \sum_{t \in G_l} tr \left( \mathbf{O}^{\dagger}_i\mathbf{z}_{i,t:t+{\mu_i}-1} \left( \mathbf{O}^{\dagger}_i\mathbf{z}_{i,t:t+{\mu_i}-1} \right)^T \mathbf{e}(t) \mathbf{e}^T(t) \right) \right) \nonumber\\
& \stackrel{(e)}= \frac{ 1}{N^2_B}\sum_{t \in G_l} tr \left( \mathbb{E} \left( \mathbf{O}^{\dagger}_i\mathbf{z}_{i,t:t+{\mu_i}-1} \mathbf{z}^T_{i,t:t+{\mu_i}-1} \mathbf{O}^{\dagger T }_i  \right)
\mathbb{E}
\left(\mathbf{e}(t) \mathbf{e}^T(t) \right) \right) \nonumber\\
& \stackrel{(f)}\leq \frac{1}{N^2_B} \sum_{t \in G_l} \lambda^* tr \left( \mathbb{E}
\left(\mathbf{e}(t) \mathbf{e}^T(t) \right) \right) \nonumber\\
& = \frac{ \lambda^*}{N_B} \mathbb{E}
\left( \frac{1}{N_B} \sum_{t \in G_l} tr \left( \mathbf{e}(t) \mathbf{e}^T(t) \right) \right) \nonumber\\
& = \frac{ \lambda^*}{N_B} \mathbb{E}
\left( \frac{1}{N_B} \sum_{t \in G_l} \mathbf{e}^T(t) \mathbf{e}(t) \right) \nonumber\\
& \stackrel{(g)} \leq \epsilon_1,
\end{align}}where (a) follows from \eqref{eq:zero_mean_vector_prediction}, (b) follows from the independence of $ \mathbf{z}_{i,t':t'+{\mu_i}-1} $ from $ \mathbf{e}^T(t) \mathbf{O}^{\dagger}_i \mathbf{z}_{i,t:t+{\mu_i}-1} \mathbf{e}^T(t') \mathbf{O}^{\dagger}_i$ for $t'>t$,
(c) follows from $\mathbb{E}\left( \mathbf{z}_{i,t':t'+{\mu_i}-1} \right) =\mathbf{0}$,
(d) follows from $\mathbf{e}^T(t) \mathbf{O}^{\dagger}_i\mathbf{z}_{i,t:t+{\mu_i}-1}$ being a scalar,
(e) follows from the independence of $ \mathbf{z}_{i,t:t+{\mu_i}-1} $ from $\mathbf{e}(t)$,
(f) follows from Lemma~\ref{lemma:eigen_bound} (discussed in Appendix~\ref{sec:trace_ineq}) with eigen value $\lambda^* = \lambda_{max} \left( \mathbb{E} \left( \mathbf{O}^{\dagger}_i\mathbf{z}_{i,t:t+{\mu_i}-1} \left( \mathbf{O}^{\dagger}_i\mathbf{z}_{i,t:t+{\mu_i}-1} \right)^T \right) \right) = \lambda_{max} \left( \mathbf{O}^{\dagger}_i \mathbf{M}_{\mu_i} \mathbf{O}^{\dagger T}_i \right)$
(\emph{i.e.}, $\lambda^*$ is the maximum eigen value of
$\mathbf{O}^{\dagger}_i \mathbf{M}_{\mu_i} \mathbf{O}^{\dagger T}_i$).
Finally, (g) follows from \eqref{eq:expectation_bound_general_case_prediction}.
This completes the variance analysis and clearly the cross term $\frac{2}{N_B} \sum_{t \in G_l} \mathbf{e}^T(t) \mathbf{O}^{\dagger}_i\mathbf{z}_{i,t:t+{\mu_i}-1} $ has vanishingly small variance as $N_B \rightarrow \infty$. As a result, using Chebyshev's inequality and \eqref{eq:vector_state_prediction_bound1}, we have the following bound:
for any $\epsilon_2 > 0$ and $\delta > 0$, there exists a large enough $N_B$ such that:
\begin{align}
\mathbb{P} \left ( \frac{1}{N_B} \sum_{t \in G_l } \mathbf{e}^T(t) \mathbf{e}(t)  \leq P_{opt, \mathbf{s}} +  \epsilon_2 \right) \geq 1-\delta . \label{eq:N_B_final_bound}
\end{align}
Since $\frac{1}{N_B} \sum_{t \in G_l } \mathbf{e}^T(t) \mathbf{e}(t) \leq P_{opt, \mathbf{s}} +  \epsilon_2 \quad \forall l \in\{0,1,\ldots \mu_i-1\}$ implies $\frac{1}{N} \sum_{t \in G } \mathbf{e}^T(t) \mathbf{e}(t)  \leq P_{opt, \mathbf{s}} + \epsilon_2$, we have the required bound on $\frac{1}{N} \sum_{t \in G } \mathbf{e}^T(t) \mathbf{e}(t) $ from \eqref{eq:N_B_final_bound} as follows. 
For any $\epsilon_2 > 0$ and $\delta > 0$, there exists a large enough $N$ such that:
\begin{align}
\mathbb{P} \left ( \frac{1}{N} \sum_{t \in G } \mathbf{e}^T(t) \mathbf{e}(t)  \leq  P_{opt, \mathbf{s}} +  \epsilon_2 \right) \geq 1-\delta . \label{eq:N_final_bound}
\end{align}
This completes our performance analysis.

\subsection{Bounds on the trace of product of symmetric matrices} \label{sec:trace_ineq}
A useful lemma from \cite{trace_ineq_wang} providing bounds on the trace of product of symmetric matrices is as follows.
\begin{lemma} \label{lemma:eigen_bound}
If $\mathbf{A}$ and $\mathbf{B}$ are two symmetric matrices in $\mathbb{R}^{n \times n }$, and $\mathbf{B}$ is positive semi-definite (\emph{i.e.}, $\mathbf{B} \succeq 0 $), then the following inequality holds:
\begin{align}
\lambda_{min} \left ( \mathbf{A} \right) tr \left ( \mathbf{B} \right) \leq tr \left ( \mathbf{A} \mathbf{B} \right) \leq \lambda_{max} \left ( \mathbf{A} \right) tr \left ( \mathbf{B} \right).
\end{align}
where $\lambda_{min}\left ( \mathbf{A} \right)$  and $\lambda_{max}\left ( \mathbf{A} \right)$ denote the minimum and maximum eigen values of matrix $\mathbf{A}$.
\end{lemma}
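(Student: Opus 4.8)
The plan is to reduce the matrix inequality to a family of scalar Rayleigh-quotient bounds via the spectral decomposition of $\mathbf{B}$. Since $\mathbf{B}$ is symmetric, I would first write its eigendecomposition $\mathbf{B} = \sum_{i=1}^{n} \mu_i \mathbf{u}_i \mathbf{u}_i^T$, where the $\mathbf{u}_i$ form an orthonormal eigenbasis and the $\mu_i$ are the eigenvalues of $\mathbf{B}$. The crucial consequence of the hypothesis $\mathbf{B} \succeq 0$ is that every $\mu_i \geq 0$; this is exactly what will let me preserve the directions of the inequalities when the scalar bounds are later scaled by $\mu_i$ and summed.

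The second step is to expand $\mathrm{tr}(\mathbf{A}\mathbf{B})$ against this decomposition. Using linearity of the trace together with the cyclic identity $\mathrm{tr}(\mathbf{A}\mathbf{u}_i\mathbf{u}_i^T) = \mathbf{u}_i^T \mathbf{A} \mathbf{u}_i$, I obtain $\mathrm{tr}(\mathbf{A}\mathbf{B}) = \sum_{i=1}^{n} \mu_i\, \mathbf{u}_i^T \mathbf{A} \mathbf{u}_i$. Each term $\mathbf{u}_i^T \mathbf{A} \mathbf{u}_i$ is a Rayleigh quotient of the symmetric matrix $\mathbf{A}$ evaluated at a unit vector, so the variational characterization of the extreme eigenvalues gives $\lambda_{min}(\mathbf{A}) \leq \mathbf{u}_i^T \mathbf{A} \mathbf{u}_i \leq \lambda_{max}(\mathbf{A})$ for every $i$.

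The final step combines these bounds. Because $\mu_i \geq 0$, multiplying each Rayleigh bound by $\mu_i$ and summing over $i$ yields $\lambda_{min}(\mathbf{A}) \sum_i \mu_i \leq \sum_i \mu_i \mathbf{u}_i^T \mathbf{A} \mathbf{u}_i \leq \lambda_{max}(\mathbf{A}) \sum_i \mu_i$. Recognizing $\sum_i \mu_i = \mathrm{tr}(\mathbf{B})$ and identifying the middle sum as $\mathrm{tr}(\mathbf{A}\mathbf{B})$ produces the claimed two-sided inequality. As an alternative route worth noting, one can conjugate the Loewner bounds $\lambda_{min}(\mathbf{A})\mathbf{I} \preceq \mathbf{A} \preceq \lambda_{max}(\mathbf{A})\mathbf{I}$ by the positive semidefinite square root $\mathbf{B}^{1/2}$ to get $\lambda_{min}(\mathbf{A})\mathbf{B} \preceq \mathbf{B}^{1/2}\mathbf{A}\mathbf{B}^{1/2} \preceq \lambda_{max}(\mathbf{A})\mathbf{B}$, then take traces using $\mathrm{tr}(\mathbf{B}^{1/2}\mathbf{A}\mathbf{B}^{1/2}) = \mathrm{tr}(\mathbf{A}\mathbf{B})$ and the monotonicity of the trace under the Loewner order. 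There is no genuine obstacle here; the only point that requires care is to invoke $\mathbf{B} \succeq 0$ — either as nonnegativity of the $\mu_i$ or as existence of $\mathbf{B}^{1/2}$ — precisely at the step where inequalities are scaled, since without it the bounds would not retain their direction.
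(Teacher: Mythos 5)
Your proof is correct, and it is worth noting that the paper itself offers no proof at all: Lemma~\ref{lemma:eigen_bound} is simply imported by citation from the reference on trace inequalities, so your argument fills in what the authors left to the literature. Both of your routes are sound. The spectral route is airtight: writing $\mathbf{B} = \sum_i \mu_i \mathbf{u}_i \mathbf{u}_i^T$ with $\mu_i \geq 0$, using $\mathrm{tr}(\mathbf{A}\mathbf{u}_i\mathbf{u}_i^T) = \mathbf{u}_i^T\mathbf{A}\mathbf{u}_i$, bounding each Rayleigh quotient by the extreme eigenvalues of the symmetric matrix $\mathbf{A}$, and summing against the nonnegative weights $\mu_i$ gives exactly the two-sided bound, with $\sum_i \mu_i = \mathrm{tr}(\mathbf{B})$. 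You also correctly isolate the one place where $\mathbf{B} \succeq 0$ is indispensable — without nonnegative $\mu_i$ the summed inequalities would not keep their direction (and indeed the lemma fails for indefinite $\mathbf{B}$). The alternative via $\lambda_{min}(\mathbf{A})\mathbf{I} \preceq \mathbf{A} \preceq \lambda_{max}(\mathbf{A})\mathbf{I}$, conjugation by $\mathbf{B}^{1/2}$, and monotonicity of the trace under the Loewner order is an equally valid and slightly slicker packaging of the same idea. Either version suffices for the role the lemma plays in the paper, namely bounding $tr\left(\mathbb{E}\left(\mathbf{O}^{\dagger}_i\mathbf{z}_{i,t:t+\mu_i-1}\mathbf{z}^T_{i,t:t+\mu_i-1}\mathbf{O}^{\dagger T}_i\right)\mathbb{E}\left(\mathbf{e}(t)\mathbf{e}^T(t)\right)\right)$ by $\lambda^{*}\, tr\left(\mathbb{E}\left(\mathbf{e}(t)\mathbf{e}^T(t)\right)\right)$ in the variance analyses, where the second factor is the positive semi-definite one.
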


\subsection{Secure state filtering} \label{sec:filtering}
In this section,
for the general linear dynamical system defined in \eqref{eq:vector_system_model},
we study the filtering problem where the goal is to estimate the state at time $t$ based on outputs till time $t$ (in contrast to using outputs till time $t-1$ in the prediction problem).
In the absence of sensor attacks,
using a Kalman filter for state filtering in \eqref{eq:vector_system_model} leads to the optimal (MMSE) error covariance asymptotically \cite{kailath_book}.
The Kalman filter update rule (in steady state) for the filtering problem (without sensor attacks) is as shown below:
\begin{align}
\hat{\mathbf{x}}(t) & = \hat{\mathbf{x}}^{(P)}(t) + \mathbf{L} \left ( \mathbf{y}(t) - \mathbf{C} \hat{\mathbf{x}}^{(P)}(t) \right), \quad 
\hat{\mathbf{x}}^{(P)}(t+1) = \mathbf{A} \hat{\mathbf{x}}(t) ,\label{eq:filtering_update}
\end{align}
where $\hat{\mathbf{x}}(t)$ is the state (filtering) estimate (see \cite{kailath_book} for further details).
The filtering error is defined as $\mathbf{e}(t) = \mathbf{x}(t) - \mathbf{\hat{x}}(t)$, and as shown in \eqref{eq:filtering_update}, the state estimate $\hat{\mathbf{x}}(t)$ at time $t$ depends on the outputs at time $t$. Also, in the absence of sensor attacks, $F_{opt,\mathbf{s}}$ is the trace of steady state (filtering) error covariance matrix obtained by using the Kalman filter on a sensor subset $\mathbf{s} \subseteq \{1,2,\ldots p\}$.

\par For the secure state filtering problem,
we assume that sparse observability condition \eqref{eq:theta_condition}, and assumptions (A3) and (A4) hold against a $k$-adversary. In addition to the notation developed in Section~\ref{sec:vector_state_prediction} for the prediction problem, we will require the following definition: $\mathbf{L}_i \in \mathbb{R}^{ n \times \theta}$ denotes the matrix formed by columns of $\mathbf{L}$ corresponding to sensor subset $i$ of set $\mathbf{s}$ (subset $i$ is of size $\theta$).
The algorithm for secure state filtering (and its analysis) is similar to that for the prediction setting. In the remainder of this section,
we first describe the secure state filtering algorithm
and then analyze its performance.
\paragraph*{Secure state filtering algorithm}
\begin{algorithm}[t]
\caption{\textsc{Secure State Filtering - vector case}}
\begin{algorithmic}[1]
\STATE Enumerate all sets $\mathbf{s} \in \mathbf{S} $ such that:\\ \mbox{$\qquad \qquad \mathbf{S} = \{ \mathbf{s} | \mathbf{s} \subset \{1,2, \ldots p\}, \; |\mathbf{s}| = p-k \}$}.
\STATE  For each $\mathbf{s} \in \mathbf{S}$, run a Kalman filter that uses all sensors indexed by $\mathbf{s}$. The corresponding Kalman (filtering) estimate is denoted by $\hat{\mathbf{x}}_{\mathbf{s}}(t) \in \R^n$.
\STATE For each set $\mathbf{s} \in \mathbf{S}$, enumerate all subsets of size $\theta$ and index them by $i$. Let $\mu_i$ be the observability index associated with sensor subset $i$.
For each subset $i$ of $\mathbf{s}$ (subset of size $\theta$), calculate the \textit{block} residue:
$$ \mathbf{r}_i(t) = \begin{bmatrix} \mathbf{y}_i(t) \\ \mathbf{y}_i(t+1) \\ \vdots \\ \mathbf{y}_i(t+\mu_i -1 ) \end{bmatrix} - \mathbf{O}_i \hat{\mathbf{x}}_{\mathbf{s}}(t)\quad  \forall t \in G.$$

\STATE 
Pick the set $\mathbf{s}^{\star} \in \mathbf{S}$ which satisfies the following block residue test for
each subset $i$ of $\mathbf{s}^\star$ (subset of size $\theta$).
Partition $G$ into $\mu_i$ groups $G_0, G_1, \ldots G_{{\mu_i}-1}$ of size $N_B$ such that $G_l = \{ t | \left ( \left( t-t_1 \right) \; mod \; {\mu_i}  \right) = l \}$ and check that for each $G_l$:
\begin{align}
& \frac{1}{N_B} \sum_{t \in G_l} tr\left( \mathbf{O}^{\dagger}_i \mathbf{r}_i(t) \mathbf{r}^T_i(t) \mathbf{O}^{\dagger T}_i\right)  \nonumber \\
& \leq F_{opt, \mathbf{s}} + tr \left( \mathbf{O}^{\dagger}_i \mathbf{M}_{\mu_i} \mathbf{O}^{\dagger T}_i \right)  \nonumber \\ & \quad-  2 \mathbb{E}\left (  \mathbf{v}^T_i(t) \mathbf{L}^T_i    \mathbf{O}^{\dagger}_i  \mathbf{v}_{i,t:t+{\mu_i}-1}  \right) + \epsilon, \label{eq:block_residue_test_filtering}
\end{align}
where $\epsilon \geq 0 $ is a design parameter which can be made arbitrarily small for large enough $N_B$.

\STATE Return $\mathbf{s}^{\star}$ and $\hat{\mathbf{x}}(t) := \hat{\mathbf{x}}_{\mathbf{s}^\star}(t) \quad \forall t  \in G$.
\end{algorithmic}
\label{alg:vector_filtering}
\end{algorithm}
Algorithm~\ref{alg:vector_filtering} shows the secure state filtering algorithm against a $k$-adversary.
It is same as Algorithm~\ref{alg:vector_prediction} except 
for the usage of Kalman (filtering) estimate and the bound used for the block residue test \eqref{eq:block_residue_test_filtering}; $F_{opt, \mathbf{s}}$ is used instead of $P_{opt, \mathbf{s}}$ and there is an extra term $-  2 \mathbb{E}\left (  \mathbf{v}^T_i(t) \mathbf{L}^T_i    \mathbf{O}^{\dagger}_i  \mathbf{v}_{i,t:t+{\mu_i}-1}  \right)$ (where $\mathbf{v}_{i,t:t+{\mu_i}-1}$ is as defined in \eqref{eq:vi_def}).

\paragraph*{Performance analysis}
The performance analysis is similar to the analysis done for the prediction problem in Appendix~\ref{sec:vector_prediction_analysis} and we describe the details below.

\par Consider the set $\mathbf{s}$ of $p-k$ sensors which are not attacked by the $k$-adversary.
For such a set $\mathbf{s}$,
the block residue $\mathbf{r}_i(t)$ for a subset $i$ of $\mathbf{s}$ (subset of size $\theta$) can be expressed as shown below:
\begin{align}
\mathbf{r}_i(t) & =\mathbf{O}_i \left(  \mathbf{x}(t)  - \hat{\mathbf{x}}_{\mathbf{s}}(t) \right) + \mathbf{z}_{i,t:t+{\mu_i}-1},
\end{align}
and assuming that the Kalman filter corresponding to sensor set $\mathbf{s}$ is in steady state, it can be shown that:
\begin{align}
& \mathbb{E} \left( tr\left( \mathbf{O}^{\dagger}_i \mathbf{r}_i(t) \mathbf{r}^T_i(t) \mathbf{O}^{\dagger T}_i\right) \right) \nonumber \\
& \stackrel{(a)} = F_{opt,\mathbf{s}} + tr \left( \mathbf{O}^{\dagger}_i \mathbf{M}_{\mu_i} \mathbf{O}^{\dagger T}_i \right) 
-  2 \mathbb{E}\left (  \mathbf{v}^T_i(t) \mathbf{L}^T_i    \mathbf{O}^{\dagger}_i  \mathbf{v}_{i,t:t+{\mu_i}-1}  \right)  \label{eq:expected_value_residue_filtering}.
\end{align}
Hence, due to LLN, the block residue test \eqref{eq:block_residue_test_filtering} will be satisfied w.h.p. for at least this set of good sensors and w.h.p. the algorithm will not return an empty set.
Also, the estimate $\hat{\mathbf{x}}_{\mathbf{s}}(t)$ from this set of good sensors trivially satisfies the error bound \eqref{eq:ssf}.
But, since the algorithm can return any set of size $p-k$ which satisfies the block residue test,
it may be possible that some of the sensors in the returned set are corrupt.
In the remainder of our analysis, we show that for \textit{any} set returned by the algorithm, the corresponding Kalman estimate achieves the error bound \eqref{eq:ssf}.

\par Suppose the algorithm returns a set $\mathbf{s}$ of $p-k$ sensors.
Since $ \theta \leq p -2k$ (sparse observability condition), there exists a subset of $\theta$ good sensors in $\mathbf{s}$. The following can be inferred when the block residue test is satisfied for such a subset $i$ (of size $\theta$):
{\allowdisplaybreaks[4]
\begin{align}
& tr \left ( \frac{1}{N_B} \sum_{t \in G_l} \mathbf{O}^{\dagger}_i  \mathbf{r}_i(t) \mathbf{r}^T_i(t) \mathbf{O}^{\dagger T}_i \right) \nonumber \\
&\stackrel{(a)}= \frac{1}{N_B} \sum_{t \in G_l} \mathbf{e}^T(t)\mathbf{e}(t)
+ \frac{1}{N_B} \sum_{t \in G_l} tr\left ( \mathbf{O}^{\dagger }_i\mathbf{z}_{i,t:t+{\mu_i}-1} \mathbf{z}^T_{i,t:t+{\mu_i}-1} \mathbf{O}^{\dagger T}_i\right ) \nonumber \\ & \quad 
+ \frac{2}{N_B} \sum_{t \in G_l} \mathbf{e}^T(t) \mathbf{O}^{\dagger }_i\mathbf{z}_{i,t:t+{\mu_i}-1}
\nonumber \\
& \stackrel{(b)}\leq F_{opt,\mathbf{s}} + tr\left ( \mathbf{O}^{\dagger}_i \mathbf{M}_{\mu_i} \mathbf{O}^{\dagger T}_i \right) \nonumber \\ & \quad - 2 \mathbb{E}\left (  \mathbf{v}^T_i(t) \mathbf{L}^T_i    \mathbf{O}^{\dagger}_i  \mathbf{v}_{i,t:t+{\mu_i}-1}  \right)   + \epsilon,
\end{align}}where $\mathbf{e}(t)$ in (a) is the state estimation error at time $t$ (in the presence of a $k$-adversary) when $\hat{\mathbf{x}}_{\mathbf{s}}(t)$ is used as the state estimate, and (b) follows from the block residue test \eqref{eq:block_residue_test_filtering}.
Using (a) and (b) above, for any $\epsilon > 0$ there exists a large enough $N_B$ such that:
\begin{align}
& \frac{1}{N_B} \sum_{t \in G_l} \mathbf{e}^T(t)\mathbf{e}(t)
+ \frac{2}{N_B} \sum_{t \in G_l} \mathbf{e}^T(t) \mathbf{O}^{\dagger }_i\mathbf{z}_{i,t:t+{\mu_i}-1} \label{eq:cross_term_filtering}  \\
& \leq F_{opt,\mathbf{s}}  - 2 \mathbb{E}\left (  \mathbf{v}^T_i(t) \mathbf{L}^T_i    \mathbf{O}^{\dagger}_i  \mathbf{v}_{i,t:t+{\mu_i}-1}  \right)   + \epsilon  \nonumber \\
& \; + \left | tr\left ( \mathbf{O}^{\dagger }_i \mathbf{M}_{\mu_i} \mathbf{O}^{\dagger T}_i \right) - \frac{1}{N_B} \sum_{t \in G_l} tr\left ( \mathbf{O}^{\dagger }_i\mathbf{z}_{i,t:t+{\mu_i}-1} \mathbf{z}^T_{i,t:t+{\mu_i}-1} \mathbf{O}^{\dagger T}_i\right ) \right| \nonumber \\
& \stackrel{(c)}\leq F_{opt, \mathbf{s}}   - 2 \mathbb{E}\left (  \mathbf{v}^T_i(t) \mathbf{L}^T_i    \mathbf{O}^{\dagger}_i  \mathbf{v}_{i,t:t+{\mu_i}-1}  \right)   + 2 \epsilon,
\label{eq:filtering_bound1}
\end{align}
where (c) follows w.h.p. from LLN as for different time indices in $G_l$, $tr\left ( \mathbf{O}^{\dagger }_i\mathbf{z}_{i,t:t+{\mu_i}-1} \mathbf{z}^T_{i,t:t+{\mu_i}-1} \mathbf{O}^{\dagger T}_i\right )$ corresponds to i.i.d. realizations of the same random variable. Similar to the prediction problem, it can be shown that the cross term $\frac{2}{N_B} \sum_{t \in G_l} \mathbf{e}^T(t) \mathbf{O}^{\dagger }_i\mathbf{z}_{i,t:t+{\mu_i}-1}$ in \eqref{eq:cross_term_filtering} has mean $- 2 \mathbb{E}\left (  \mathbf{v}^T_i(t) \mathbf{L}^T_i    \mathbf{O}^{\dagger}_i  \mathbf{v}_{i,t:t+{\mu_i}-1}  \right)$ and has vanishingly small variance as $N_B \rightarrow \infty$. This leads to the claimed bound \eqref{eq:ssf} on state estimation error and we describe the details below. 
\par For simplifying our calculations, we introduce the term $\tilde{\mathbf{e}}(t) = \mathbf{e}(t) + \mathbf{L}_i \mathbf{v}_i(t) $. Due to assumptions (A3) and (A4), $\tilde{\mathbf{e}}(t)$ is independent from $\mathbf{w}(t)$ and $\mathbf{v}_i(t)$, and hence independent from $\mathbf{z}_{i,t:t+{\mu_i}-1}$.
Now, the mean of the cross term $\frac{2}{N_B} \sum_{t \in G_l} \mathbf{e}^T(t) \mathbf{O}^{\dagger }_i\mathbf{z}_{i,t:t+{\mu_i}-1}$ can be computed as follows:
\begin{align}
& \frac{2}{N_B} \sum_{t \in G_l} \mathbb{E} \left( \mathbf{e}^T(t) \mathbf{O}^{\dagger }_i\mathbf{z}_{i,t:t+{\mu_i}-1} \right)  \nonumber \\
& = \frac{2}{N_B} \sum_{t \in G_l} \left( \mathbb{E} \left( \tilde{\mathbf{e}}^T(t) \mathbf{O}^{\dagger }_i\mathbf{z}_{i,t:t+{\mu_i}-1} \right)  - \mathbb{E} \left( \mathbf{v}_i^T(t) \mathbf{L}^T_i \mathbf{O}^{\dagger }_i\mathbf{z}_{i,t:t+{\mu_i}-1} \right) \right) \nonumber \\
& \stackrel{(a)} = - 2 \mathbb{E} \left( \mathbf{v}_i^T(t) \mathbf{L}^T_i \mathbf{O}^{\dagger }_i\mathbf{z}_{i,t:t+{\mu_i}-1} \right), \label{eq:zero_mean_filtering}
\end{align}
where (a) follows from the independence of $\tilde{\mathbf{e}}(t)$ from $\mathbf{z}_{i,t:t+{\mu_i}-1}$.
Also, using \eqref{eq:zero_mean_filtering} and taking the expectation in \eqref{eq:cross_term_filtering}:
\begin{align}
\mathbb{E} \left ( \frac{1}{N_B} \sum_{t \in G_l} \mathbf{e}^T(t)\mathbf{e}(t) \right ) & \leq F_{opt, \mathbf{s}}  + 2\epsilon \label{eq:expectation_bound_general_case} .
\end{align}

\par We now state the following claim which is useful in our variance calculation for the cross term
$\frac{2}{N_B} \sum_{t \in G_l} \mathbf{e}^T(t) \mathbf{O}^{\dagger }_i\mathbf{z}_{i,t:t+{\mu_i}-1}$. 
\begin{claim} \label{claim:bounded_e_tilde}
Consider a subset $i$ of $\mathbf{s}$ (subset of size $\theta$)
which satisfies the residue test \eqref{eq:block_residue_test_filtering}
in Algorithm \ref{alg:vector_filtering}.
With $\tilde{\mathbf{e}}(t) = \begin{bmatrix} \tilde{e}_1(t) \\ \vdots \\   \tilde{e}_n(t) \end{bmatrix} = \mathbf{e}(t) + \mathbf{L}_i \mathbf{v}_i(t) $, the following holds:
\begin{align}
\frac{1}{N_B} \sum_{t \in G_l}   \mathbb{E} \left(    \tilde{\mathbf{e}}^T(t) \tilde{\mathbf{e}}(t) \right) < \eta_1,
\end{align}
where $\eta_1$ is a constant. Furthermore, $\forall d \in \{1,2,\ldots n \}$, $\frac{1}{N_B} \sum_{t \in G_l}   \mathbb{E} \left(  \tilde{e}_d(t) \right) < \eta_2$ where $\eta_2$ is a constant.
\end{claim}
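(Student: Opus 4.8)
The plan is to reduce the claim to the already-established second-moment bound \eqref{eq:expectation_bound_general_case} on the filtering error $\mathbf{e}(t)$, exploiting the fact (noted just above the claim) that, under assumptions (A3) and (A4), $\tilde{\mathbf{e}}(t)=\mathbf{e}(t)+\mathbf{L}_i\mathbf{v}_i(t)$ is independent of the good-sensor noise $\mathbf{v}_i(t)$. First I would rewrite $\mathbf{e}(t)=\tilde{\mathbf{e}}(t)-\mathbf{L}_i\mathbf{v}_i(t)$ and expand
\[
\mathbf{e}^T(t)\mathbf{e}(t)=\tilde{\mathbf{e}}^T(t)\tilde{\mathbf{e}}(t)-2\,\tilde{\mathbf{e}}^T(t)\mathbf{L}_i\mathbf{v}_i(t)+\mathbf{v}_i^T(t)\mathbf{L}_i^T\mathbf{L}_i\mathbf{v}_i(t).
\]
Taking expectations, the cross term vanishes because $\tilde{\mathbf{e}}(t)$ is independent of the zero-mean $\mathbf{v}_i(t)$, so that $\mathbb{E}\!\left(\tilde{\mathbf{e}}^T(t)\mathbf{L}_i\mathbf{v}_i(t)\right)=\mathbb{E}\!\left(\tilde{\mathbf{e}}^T(t)\right)\mathbf{L}_i\,\mathbb{E}\!\left(\mathbf{v}_i(t)\right)=0$, while the last term equals the constant $\sigma_v^2\,tr(\mathbf{L}_i^T\mathbf{L}_i)$ since $\mathbf{v}_i(t)\sim\mathcal{N}(\mathbf{0},\sigma_v^2\mathbf{I}_\theta)$.

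This yields $\mathbb{E}(\tilde{\mathbf{e}}^T(t)\tilde{\mathbf{e}}(t))=\mathbb{E}(\mathbf{e}^T(t)\mathbf{e}(t))-\sigma_v^2\,tr(\mathbf{L}_i^T\mathbf{L}_i)\le \mathbb{E}(\mathbf{e}^T(t)\mathbf{e}(t))$. Averaging over $t\in G_l$ and invoking \eqref{eq:expectation_bound_general_case} gives $\frac{1}{N_B}\sum_{t\in G_l}\mathbb{E}(\tilde{\mathbf{e}}^T(t)\tilde{\mathbf{e}}(t))\le F_{opt,\mathbf{s}}+2\epsilon$, a finite constant independent of $N_B$; choosing $\eta_1$ to be any value strictly above $F_{opt,\mathbf{s}}+2\epsilon$ establishes the first inequality.

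For the second assertion I would control each coordinate's first moment by its second moment. Since $\mathbb{E}(\tilde{e}_d^2(t))\le\mathbb{E}(\tilde{\mathbf{e}}^T(t)\tilde{\mathbf{e}}(t))$, the elementary inequality $\mathbb{E}(X)\le\sqrt{\mathbb{E}(X^2)}$ gives $\mathbb{E}(\tilde{e}_d(t))\le\sqrt{\mathbb{E}(\tilde{\mathbf{e}}^T(t)\tilde{\mathbf{e}}(t))}$, and averaging over $G_l$ together with the concavity of the square root (Jensen) yields
\[
\frac{1}{N_B}\sum_{t\in G_l}\mathbb{E}(\tilde{e}_d(t))\le\sqrt{\frac{1}{N_B}\sum_{t\in G_l}\mathbb{E}(\tilde{\mathbf{e}}^T(t)\tilde{\mathbf{e}}(t))}<\sqrt{\eta_1}=:\eta_2,
\]
which holds uniformly in $d\in\{1,2,\ldots n\}$.

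All of these steps are routine once the correct decomposition is fixed; the one point I would treat most carefully is the vanishing of the cross term, namely that it is $\tilde{\mathbf{e}}(t)$ --- and not $\mathbf{e}(t)$ itself --- that is independent of $\mathbf{v}_i(t)$. In the filtering setting the error $\mathbf{e}(t)$ genuinely depends on $\mathbf{v}_i(t)$ through the measurement update at time $t$, so attempting the same computation with $\mathbf{e}(t)$ directly would leave an uncontrolled cross term; the substitution $\tilde{\mathbf{e}}(t)=\mathbf{e}(t)+\mathbf{L}_i\mathbf{v}_i(t)$ is designed precisely to absorb this dependence, which is why the stronger assumptions (A3)--(A4) are invoked here rather than their prediction counterparts (A1)--(A2).
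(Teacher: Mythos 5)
Your proposal is correct and follows essentially the same route as the paper's own proof: both rewrite $\mathbf{e}(t) = \tilde{\mathbf{e}}(t) - \mathbf{L}_i \mathbf{v}_i(t)$, use the independence of $\tilde{\mathbf{e}}(t)$ from the zero-mean $\mathbf{v}_i(t)$ (guaranteed by (A3)--(A4)) to eliminate the cross term, and then invoke the second-moment bound \eqref{eq:expectation_bound_general_case} to conclude that $\frac{1}{N_B}\sum_{t \in G_l} \mathbb{E}\left( \tilde{\mathbf{e}}^T(t)\tilde{\mathbf{e}}(t) \right)$ is bounded by the constant $F_{opt,\mathbf{s}} + 2\epsilon$ (the paper states this combined inequality directly, with the nonnegative term $\mathbb{E}\left( \mathbf{v}_i^T(t)\mathbf{L}_i^T\mathbf{L}_i\mathbf{v}_i(t) \right)$ kept on the left). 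The only difference is cosmetic and occurs in the coordinate-wise first-moment step: you use $\mathbb{E}\left( \tilde{e}_d(t) \right) \le \sqrt{\mathbb{E}\left( \tilde{e}_d^2(t) \right)}$ plus concavity of the square root, obtaining $\eta_2 = \sqrt{\eta_1}$, whereas the paper splits the time indices according to whether $\left| \mathbb{E}\left( \tilde{e}_d(t) \right) \right| \ge 1$ and uses $|x| \le 1 + x^2$ with Jensen's inequality, obtaining $\eta_2 = 1 + \eta_1$; both arguments are equally valid.
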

\begin{IEEEproof} See Appendix~\ref{sec:proof_claim_bounded_e_tilde}.
\end{IEEEproof}
\par Now, we will show that the variance of $\frac{1}{N_B} \sum_{t \in G_l} \mathbf{e}^T(t) \mathbf{O}^{\dagger}_i\mathbf{z}_{i,t:t+{\mu_i}-1} $ is vanishingly small as $N_B \rightarrow \infty$.
For any $\epsilon_1 > 0$, there exists a large enough $N_B$ such that:
\allowdisplaybreaks[4]{
\begin{align}
&\mathbb{E}  \left( \left( \frac{1}{N_B} \sum_{t \in G_l} \mathbf{e}^T(t) \mathbf{O}^{\dagger}_i\mathbf{z}_{i,t:t+{\mu_i}-1} \right)^2 \right)  \nonumber\\
& =\mathbb{E} \left( \frac{1}{N^2_B} \sum_{t \in G_l} \mathbf{e}^T(t) \mathbf{O}^{\dagger}_i\mathbf{z}_{i,t:t+{\mu_i}-1} \mathbf{e}^T(t) \mathbf{O}^{\dagger}_i\mathbf{z}_{i,t:t+{\mu_i}-1} \right) \nonumber \\ & \quad+ 
\mathbb{E} \left( \frac{2}{N^2_B} \sum_{t,t' \in G_l, \; t < t'} \mathbf{e}^T(t) \mathbf{O}^{\dagger}_i\mathbf{z}_{i,t:t+{\mu_i}-1} \mathbf{e}^T(t') \mathbf{O}^{\dagger}_i\mathbf{z}_{i,t':t'+{\mu_i}-1} \right)
\nonumber\\
& = \mathbb{E} \left( \frac{1}{N^2_B} \sum_{t \in G_l} \mathbf{e}^T(t) \mathbf{O}^{\dagger}_i\mathbf{z}_{i,t:t+{\mu_i}-1} \mathbf{e}^T(t) \mathbf{O}^{\dagger}_i\mathbf{z}_{i,t:t+{\mu_i}-1} \right)  \nonumber \\ & \quad + 
\mathbb{E} \left( \frac{2}{N^2_B} \sum_{t,t' \in G_l, \; t < t'} \mathbf{e}^T(t) \mathbf{O}^{\dagger}_i\mathbf{z}_{i,t:t+{\mu_i}-1} \tilde{\mathbf{e}}^T(t') \mathbf{O}^{\dagger}_i\mathbf{z}_{i,t':t'+{\mu_i}-1} \right) \nonumber \\
& \quad - \mathbb{E} \left( \frac{2}{N^2_B} \sum_{t,t' \in G_l, \; t < t'} \mathbf{e}^T(t) \mathbf{O}^{\dagger}_i\mathbf{z}_{i,t:t+{\mu_i}-1}  \mathbf{v}^T_i(t') \mathbf{L}^T_i  \mathbf{O}^{\dagger}_i\mathbf{z}_{i,t':t'+{\mu_i}-1} \right) \nonumber \\
& \stackrel{(a)}= \mathbb{E} \left( \frac{1}{N^2_B} \sum_{t \in G_l} \mathbf{e}^T(t) \mathbf{O}^{\dagger}_i\mathbf{z}_{i,t:t+{\mu_i}-1} \mathbf{e}^T(t) \mathbf{O}^{\dagger}_i\mathbf{z}_{i,t:t+{\mu_i}-1} \right)  \nonumber \\ & \quad + 
 \frac{2}{N^2_B} \sum_{t,t' \in G_l, \; t < t'} \mathbb{E} \left( \mathbf{e}^T(t) \mathbf{O}^{\dagger}_i\mathbf{z}_{i,t:t+{\mu_i}-1} \tilde{\mathbf{e}}^T(t') \right) \times \nonumber\\
& \quad \quad \quad \quad \quad \quad  \quad \quad  \quad   \quad 
 \mathbb{E} \left( \mathbf{O}^{\dagger}_i\mathbf{z}_{i,t':t'+{\mu_i}-1} \right) \nonumber \\
& \quad - \mathbb{E} \left( \frac{2}{N^2_B} \sum_{t,t' \in G_l, \; t < t'} \mathbf{e}^T(t) \mathbf{O}^{\dagger}_i\mathbf{z}_{i,t:t+{\mu_i}-1}  \mathbf{v}^T_i(t') \mathbf{L}^T_i  \mathbf{O}^{\dagger}_i\mathbf{z}_{i,t':t'+{\mu_i}-1} \right) \nonumber \\
& = \mathbb{E} \left( \frac{1}{N^2_B} \sum_{t \in G_l} \mathbf{e}^T(t) \mathbf{O}^{\dagger}_i\mathbf{z}_{i,t:t+{\mu_i}-1} \mathbf{e}^T(t) \mathbf{O}^{\dagger}_i\mathbf{z}_{i,t:t+{\mu_i}-1} \right) \nonumber \\ & \quad 
 - \mathbb{E} \left( \frac{2}{N^2_B} \sum_{t,t' \in G_l, \; t < t'} \mathbf{e}^T(t) \mathbf{O}^{\dagger}_i\mathbf{z}_{i,t:t+{\mu_i}-1}  \mathbf{v}^T_i(t') \mathbf{L}^T_i  \mathbf{O}^{\dagger}_i\mathbf{z}_{i,t':t'+{\mu_i}-1} \right) \nonumber \\
& = \mathbb{E} \left( \frac{1}{N^2_B} \sum_{t \in G_l} \mathbf{e}^T(t) \mathbf{O}^{\dagger}_i\mathbf{z}_{i,t:t+{\mu_i}-1} \mathbf{e}^T(t) \mathbf{O}^{\dagger}_i\mathbf{z}_{i,t:t+{\mu_i}-1} \right) 
\nonumber \\ & \quad -  \frac{2}{N^2_B} \sum_{t,t' \in G_l, \; t < t'} \mathbb{E} \left( \mathbf{e}^T(t) \mathbf{O}^{\dagger}_i\mathbf{z}_{i,t:t+{\mu_i}-1} \right)  \times \nonumber \\ & \quad \quad \quad \quad \quad \quad \quad \quad \quad  \mathbb{E} \left(  \mathbf{v}^T_i(t') \mathbf{L}^T_i  \mathbf{O}^{\dagger}_i\mathbf{z}_{i,t':t'+{\mu_i}-1} \right) \nonumber \\
& = \mathbb{E} \left( \frac{1}{N^2_B} \sum_{t \in G_l} \mathbf{e}^T(t) \mathbf{O}^{\dagger}_i\mathbf{z}_{i,t:t+{\mu_i}-1} \mathbf{e}^T(t) \mathbf{O}^{\dagger}_i\mathbf{z}_{i,t:t+{\mu_i}-1} \right) 
\nonumber \\ & \quad
+  \frac{2}{N^2_B} \sum_{t,t' \in G_l, \; t < t'} \mathbb{E} \left( \mathbf{v}^T_i(t) \mathbf{L}^T_i    \mathbf{O}^{\dagger}_i  \mathbf{v}_{i,t:t+{\mu_i}-1}\right) 
\times \nonumber \\ & \quad \quad \quad \quad \quad \quad \quad \quad 
 \mathbb{E} \left(  \mathbf{v}^T_i(t') \mathbf{L}^T_i  \mathbf{O}^{\dagger}_i\mathbf{v}_{i,t':t'+{\mu_i}-1} \right) \nonumber \\
& = \mathbb{E} \left( \frac{1}{N^2_B} \sum_{t \in G_l} \mathbf{e}^T(t) \mathbf{O}^{\dagger}_i\mathbf{z}_{i,t:t+{\mu_i}-1} \mathbf{e}^T(t) \mathbf{O}^{\dagger}_i\mathbf{z}_{i,t:t+{\mu_i}-1} \right) 
\nonumber \\ & \quad 
+  \frac{N_B (N_B-1)}{N^2_B} \left( \mathbb{E} \left( \mathbf{v}^T_i(t) \mathbf{L}^T_i    \mathbf{O}^{\dagger}_i  \mathbf{v}_{i,t:t+{\mu_i}-1}\right) \right)^2 \nonumber \\
& = \mathbb{E} \left( \frac{1}{N^2_B} \sum_{t \in G_l} \left( \mathbf{e}^T(t) \mathbf{O}^{\dagger}_i\mathbf{z}_{i,t:t+{\mu_i}-1}\right)^2 \right) 
\nonumber \\ & \quad 
+  \frac{N_B (N_B-1)}{N^2_B} \left( \mathbb{E} \left( \mathbf{v}^T_i(t) \mathbf{L}^T_i    \mathbf{O}^{\dagger}_i  \mathbf{v}_{i,t:t+{\mu_i}-1}\right) \right)^2 \nonumber \\
& = \mathbb{E} \left( \frac{1}{N^2_B} \sum_{t \in G_l} \left( \left(  \tilde{\mathbf{e}}^T(t) - \mathbf{v}_i^T(t) \mathbf{L}_i^T\right)   \mathbf{O}^{\dagger}_i\mathbf{z}_{i,t:t+{\mu_i}-1}\right)^2 \right) 
\nonumber \\ & \quad
+  \frac{N_B (N_B-1)}{N^2_B} \left( \mathbb{E} \left( \mathbf{v}^T_i(t) \mathbf{L}^T_i    \mathbf{O}^{\dagger}_i  \mathbf{v}_{i,t:t+{\mu_i}-1}\right) \right)^2 \nonumber \\
& = \mathbb{E} \left( \frac{1}{N^2_B} \sum_{t \in G_l} \left(  \tilde{\mathbf{e}}^T(t) \mathbf{O}^{\dagger}_i\mathbf{z}_{i,t:t+{\mu_i}-1} - \mathbf{v}_i^T(t) \mathbf{L}_i^T\mathbf{O}^{\dagger}_i\mathbf{z}_{i,t:t+{\mu_i}-1} \right)^2 \right) 
\nonumber \\ & \quad
+  \frac{N_B (N_B-1)}{N^2_B} \left( \mathbb{E} \left( \mathbf{v}^T_i(t) \mathbf{L}^T_i    \mathbf{O}^{\dagger}_i  \mathbf{v}_{i,t:t+{\mu_i}-1}\right) \right)^2 \nonumber \\
& = \mathbb{E} \left( \frac{1}{N^2_B} \sum_{t \in G_l}  \tilde{\mathbf{e}}^T(t) \mathbf{O}^{\dagger}_i\mathbf{z}_{i,t:t+{\mu_i}-1}  \tilde{\mathbf{e}}^T(t) \mathbf{O}^{\dagger}_i\mathbf{z}_{i,t:t+{\mu_i}-1} \right)
\nonumber \\ & \quad
+  \mathbb{E} \left( \frac{1}{N^2_B} \sum_{t \in G_l}  \mathbf{v}_i^T(t) \mathbf{L}_i^T\mathbf{O}^{\dagger}_i\mathbf{z}_{i,t:t+{\mu_i}-1}  \mathbf{v}_i^T(t) \mathbf{L}_i^T\mathbf{O}^{\dagger}_i\mathbf{z}_{i,t:t+{\mu_i}-1}  \right)
 \nonumber \\ & \quad 
- 2 \mathbb{E} \left( \frac{1}{N^2_B} \sum_{t \in G_l}   \tilde{\mathbf{e}}^T(t) \mathbf{O}^{\dagger}_i\mathbf{z}_{i,t:t+{\mu_i}-1} \mathbf{v}_i^T(t) \mathbf{L}_i^T\mathbf{O}^{\dagger}_i\mathbf{z}_{i,t:t+{\mu_i}-1}  \right)
\nonumber \\ & \quad
+  \frac{N_B (N_B-1)}{N^2_B} \left( \mathbb{E} \left( \mathbf{v}^T_i(t) \mathbf{L}^T_i    \mathbf{O}^{\dagger}_i  \mathbf{v}_{i,t:t+{\mu_i}-1}\right) \right)^2 \nonumber \\
& = \mathbb{E} \left( \frac{1}{N^2_B} \sum_{t \in G_l}  \tilde{\mathbf{e}}^T(t) \mathbf{O}^{\dagger}_i\mathbf{z}_{i,t:t+{\mu_i}-1}  \tilde{\mathbf{e}}^T(t) \mathbf{O}^{\dagger}_i\mathbf{z}_{i,t:t+{\mu_i}-1} \right)
\nonumber \\ & \quad
+  \frac{N_B}{N^2_B} \mathbb{E} \left(  \left( \mathbf{v}_i^T(t) \mathbf{L}_i^T\mathbf{O}^{\dagger}_i\mathbf{z}_{i,t:t+{\mu_i}-1} \right)^2 \right) \nonumber \\
& \quad 
- 2 \mathbb{E} \left( \frac{1}{N^2_B} \sum_{t \in G_l}   \tilde{\mathbf{e}}^T(t) \mathbf{O}^{\dagger}_i\mathbf{z}_{i,t:t+{\mu_i}-1} \mathbf{v}_i^T(t) \mathbf{L}_i^T\mathbf{O}^{\dagger}_i\mathbf{z}_{i,t:t+{\mu_i}-1}  \right)
\nonumber \\ & \quad
+  \frac{N_B (N_B-1)}{N^2_B} \left( \mathbb{E} \left( \mathbf{v}^T_i(t) \mathbf{L}^T_i    \mathbf{O}^{\dagger}_i  \mathbf{v}_{i,t:t+{\mu_i}-1}\right) \right)^2 \nonumber \\
& = \frac{1}{N^2_B} \sum_{t \in G_l}  \mathbb{E} \left(   \tilde{\mathbf{e}}^T(t) \mathbf{O}^{\dagger}_i\mathbf{z}_{i,t:t+{\mu_i}-1}  \tilde{\mathbf{e}}^T(t) \mathbf{O}^{\dagger}_i\mathbf{z}_{i,t:t+{\mu_i}-1} \right)
\nonumber \\ & \quad
+  \frac{N_B}{N^2_B} \mathbb{E} \left(  \left( \mathbf{v}_i^T(t) \mathbf{L}_i^T\mathbf{O}^{\dagger}_i\mathbf{z}_{i,t:t+{\mu_i}-1} \right)^2 \right) \nonumber \\
& \quad 
- 2 \mathbb{E} \left( \frac{1}{N^2_B} \sum_{t \in G_l}   \tilde{\mathbf{e}}^T(t) \mathbf{O}^{\dagger}_i\mathbf{z}_{i,t:t+{\mu_i}-1} \mathbf{v}_i^T(t) \mathbf{L}_i^T\mathbf{O}^{\dagger}_i\mathbf{z}_{i,t:t+{\mu_i}-1}  \right)
\nonumber \\ & \quad
+  \frac{N_B (N_B-1)}{N^2_B} \left( \mathbb{E} \left( \mathbf{v}^T_i(t) \mathbf{L}^T_i    \mathbf{O}^{\dagger}_i  \mathbf{v}_{i,t:t+{\mu_i}-1}\right) \right)^2 \nonumber \\
& \stackrel{(b)}= \frac{1}{N^2_B} \sum_{t \in G_l}  \mathbb{E} \left(   \tilde{\mathbf{e}}^T(t) \mathbf{O}^{\dagger}_i\mathbf{z}_{i,t:t+{\mu_i}-1}  \tilde{\mathbf{e}}^T(t) \mathbf{O}^{\dagger}_i\mathbf{z}_{i,t:t+{\mu_i}-1} \right)
\nonumber \\ & \quad
+  \frac{N_B}{N^2_B} \mathbb{E} \left(  \left( \mathbf{v}_i^T(t) \mathbf{L}_i^T\mathbf{O}^{\dagger}_i\mathbf{z}_{i,t:t+{\mu_i}-1} \right)^2 \right) \nonumber \\
& \quad 
-  \frac{2 \left(  \sum_{t \in G_l} \frac{  \mathbb{E} \left(  \tilde{\mathbf{e}}^T(t) \right)}{N_B}  \right)}{N_B} \mathbb{E} \left(   \mathbf{O}^{\dagger}_i\mathbf{z}_{i,t:t+{\mu_i}-1} \mathbf{v}_i^T(t) \mathbf{L}_i^T\mathbf{O}^{\dagger}_i\mathbf{z}_{i,t:t+{\mu_i}-1}  \right)
\nonumber \\ & \quad
+  \frac{N_B (N_B-1)}{N^2_B} \left( \mathbb{E} \left( \mathbf{v}^T_i(t) \mathbf{L}^T_i    \mathbf{O}^{\dagger}_i  \mathbf{v}_{i,t:t+{\mu_i}-1}\right) \right)^2 \nonumber \\
& = \frac{1}{N^2_B} \sum_{t \in G_l} tr \left(   \mathbb{E} \left(   \tilde{\mathbf{e}}^T(t) \mathbf{O}^{\dagger}_i\mathbf{z}_{i,t:t+{\mu_i}-1} 
  \left( \mathbf{O}^{\dagger}_i\mathbf{z}_{i,t:t+{\mu_i}-1} \right)^T \tilde{\mathbf{e}}(t)
\right) \right)
\nonumber \\ & \quad
+  \frac{N_B}{N^2_B} \mathbb{E} \left(  \left( \mathbf{v}_i^T(t) \mathbf{L}_i^T\mathbf{O}^{\dagger}_i\mathbf{z}_{i,t:t+{\mu_i}-1} \right)^2 \right) \nonumber \\
& \quad 
-  \frac{2 \left(  \sum_{t \in G_l} \frac{  \mathbb{E} \left(  \tilde{\mathbf{e}}^T(t) \right)}{N_B}  \right)}{N_B} \mathbb{E} \left(   \mathbf{O}^{\dagger}_i\mathbf{z}_{i,t:t+{\mu_i}-1} \mathbf{v}_i^T(t) \mathbf{L}_i^T\mathbf{O}^{\dagger}_i\mathbf{z}_{i,t:t+{\mu_i}-1}  \right)
\nonumber \\ & \quad
+  \frac{N_B (N_B-1)}{N^2_B} \left( \mathbb{E} \left( \mathbf{v}^T_i(t) \mathbf{L}^T_i    \mathbf{O}^{\dagger}_i  \mathbf{v}_{i,t:t+{\mu_i}-1}\right) \right)^2 \nonumber \\
& \stackrel{(c)}= \frac{1}{N^2_B} \times \nonumber \\ 
& \quad \;  \sum_{t \in G_l} tr \left(   
\mathbb{E} \left(    \mathbf{O}^{\dagger}_i\mathbf{z}_{i,t:t+{\mu_i}-1}   \left( \mathbf{O}^{\dagger}_i\mathbf{z}_{i,t:t+{\mu_i}-1} \right)^T  \right)
  \mathbb{E} \left(  \tilde{\mathbf{e}}(t)\tilde{\mathbf{e}}^T(t) \right)
\right)
\nonumber \\ & \quad
+  \frac{N_B}{N^2_B} \mathbb{E} \left(  \left( \mathbf{v}_i^T(t) \mathbf{L}_i^T\mathbf{O}^{\dagger}_i\mathbf{z}_{i,t:t+{\mu_i}-1} \right)^2 \right) \nonumber \\
& \quad 
-  \frac{2 \left(  \sum_{t \in G_l} \frac{  \mathbb{E} \left(  \tilde{\mathbf{e}}^T(t) \right)}{N_B}  \right) }{N_B}\mathbb{E} \left(   \mathbf{O}^{\dagger}_i\mathbf{z}_{i,t:t+{\mu_i}-1} \mathbf{v}_i^T(t) \mathbf{L}_i^T\mathbf{O}^{\dagger}_i\mathbf{z}_{i,t:t+{\mu_i}-1}  \right)
\nonumber \\ & \quad
+  \frac{N_B (N_B-1)}{N^2_B} \left( \mathbb{E} \left( \mathbf{v}^T_i(t) \mathbf{L}^T_i    \mathbf{O}^{\dagger}_i  \mathbf{v}_{i,t:t+{\mu_i}-1}\right) \right)^2 \nonumber \\
& \stackrel{(d)} \leq  \frac{1}{N^2_B} \sum_{t \in G_l}  \lambda^* tr \left( 
\mathbb{E} \left(  \tilde{\mathbf{e}}(t)\tilde{\mathbf{e}}^T(t)  \right)
\right)  \nonumber \\
& \quad +  \frac{N_B}{N^2_B} \mathbb{E} \left(  \left( \mathbf{v}_i^T(t) \mathbf{L}_i^T\mathbf{O}^{\dagger}_i\mathbf{z}_{i,t:t+{\mu_i}-1} \right)^2 \right) \nonumber \\
& \quad 
- \frac{2 \left(  \sum_{t \in G_l} \frac{  \mathbb{E} \left(  \tilde{\mathbf{e}}^T(t) \right)}{N_B}  \right)} {N_B} \mathbb{E} \left(   \mathbf{O}^{\dagger}_i\mathbf{z}_{i,t:t+{\mu_i}-1} \mathbf{v}_i^T(t) \mathbf{L}_i^T\mathbf{O}^{\dagger}_i\mathbf{z}_{i,t:t+{\mu_i}-1}  \right)
\nonumber \\ & \quad
+  \frac{N_B (N_B-1)}{N^2_B} \left( \mathbb{E} \left( \mathbf{v}^T_i(t) \mathbf{L}^T_i    \mathbf{O}^{\dagger}_i  \mathbf{v}_{i,t:t+{\mu_i}-1}\right) \right)^2 \nonumber \\
& =  \frac{1}{N^2_B} \sum_{t \in G_l}  \lambda^*  
\mathbb{E} \left(  \tilde{\mathbf{e}}^T(t) \tilde{\mathbf{e}}(t)
\right)
+  \frac{N_B}{N^2_B} \mathbb{E} \left(  \left( \mathbf{v}_i^T(t) \mathbf{L}_i^T\mathbf{O}^{\dagger}_i\mathbf{z}_{i,t:t+{\mu_i}-1} \right)^2 \right) \nonumber \\
& \quad 
- \frac{2 \left(  \sum_{t \in G_l} \frac{  \mathbb{E} \left(  \tilde{\mathbf{e}}^T(t) \right)}{N_B}  \right) } {N_B}
\mathbb{E} \left(   \mathbf{O}^{\dagger}_i\mathbf{z}_{i,t:t+{\mu_i}-1} \mathbf{v}_i^T(t) \mathbf{L}_i^T\mathbf{O}^{\dagger}_i\mathbf{z}_{i,t:t+{\mu_i}-1}  \right)
\nonumber \\ & \quad
+  \frac{N_B (N_B-1)}{N^2_B} \left( \mathbb{E} \left( \mathbf{v}^T_i(t) \mathbf{L}^T_i    \mathbf{O}^{\dagger}_i  \mathbf{v}_{i,t:t+{\mu_i}-1}\right) \right)^2 \nonumber \\
& \stackrel{(e)}\leq  \left( \mathbb{E} \left( \mathbf{v}^T_i(t) \mathbf{L}^T_i    \mathbf{O}^{\dagger}_i  \mathbf{v}_{i,t:t+{\mu_i}-1}\right) \right)^2 + \epsilon_1,
\end{align}}where (a) follows from the independence of $\mathbf{e}^T(t) \mathbf{O}^{\dagger}_i\mathbf{z}_{i,t:t+{\mu_i}-1} \tilde{\mathbf{e}}^T(t')$ from $\mathbf{z}_{i,t':t'+{\mu_i}-1}$, (b) follows from the independence of $\tilde{\mathbf{e}}(t)$ from $   \mathbf{O}^{\dagger}_i\mathbf{z}_{i,t:t+{\mu_i}-1} \mathbf{v}_i^T(t) \mathbf{L}_i^T\mathbf{O}^{\dagger}_i\mathbf{z}_{i,t:t+{\mu_i}-1} $, (c) follows from the independence of $ \tilde{\mathbf{e}}(t)$ from $   \mathbf{O}^{\dagger}_i\mathbf{z}_{i,t:t+{\mu_i}-1}$, (d) follows from Lemma~\ref{lemma:eigen_bound} (see Appendix~\ref{sec:trace_ineq}) with $\lambda^* = \lambda_{max} \left( \mathbb{E} \left( \mathbf{O}^{\dagger}_i\mathbf{z}_{i,t:t+{\mu_i}-1} \left( \mathbf{O}^{\dagger}_i\mathbf{z}_{i,t:t+{\mu_i}-1} \right)^T \right) \right)$,
and (e) follows from Claim~\ref{claim:bounded_e_tilde}.

\par The above result implies that the variance of the cross term $\frac{2}{N_B} \sum_{t \in G_l} \mathbf{e}^T(t) \mathbf{O}^{\dagger }_i\mathbf{z}_{i,t:t+{\mu_i}-1}$ is vanishingly small as $N_B \rightarrow \infty $.
As a result, using Chebyshev's inequality and \eqref{eq:filtering_bound1}, we have the following bound:
for any $\epsilon_2 > 0$ and $\delta > 0$, there exists a large enough $N_B$ such that:
\begin{align}
\mathbb{P} \left ( \frac{1}{N_B} \sum_{t \in G_l } \mathbf{e}^T(t) \mathbf{e}(t)  \leq F_{opt, \mathbf{s}} +  \epsilon_2 \right) \geq 1-\delta . \label{eq:N_B_final_bound_filtering}
\end{align}
Since $\frac{1}{N_B} \sum_{t \in G_l } \mathbf{e}^T(t) \mathbf{e}(t) \leq F_{opt, \mathbf{s}} +  \epsilon_2 \quad \forall l \in\{0,1,\ldots \mu_i-1\}$ implies $\frac{1}{N} \sum_{t \in G } \mathbf{e}^T(t) \mathbf{e}(t)  \leq F_{opt, \mathbf{s}} + \epsilon_2$, we have the required bound on $\frac{1}{N} \sum_{t \in G } \mathbf{e}^T(t) \mathbf{e}(t) $ from \eqref{eq:N_B_final_bound_filtering} as follows. 
For any $\epsilon_2 > 0$ and $\delta > 0$, there exists a large enough $N$ such that:
\begin{align}
\mathbb{P} \left ( \frac{1}{N} \sum_{t \in G } \mathbf{e}^T(t) \mathbf{e}(t)  \leq  F_{opt, \mathbf{s}} +  \epsilon_2 \right) \geq 1-\delta . \label{eq:N_final_bound_filtering}
\end{align}
This completes our performance analysis.
\subsection{Proof of Claim~\ref{claim:bounded_e_tilde}} \label{sec:proof_claim_bounded_e_tilde}
Using \eqref{eq:expectation_bound_general_case}:
\begin{align}
 &  \frac{1}{N_B} \sum_{t \in G_l}   \mathbb{E} \left(    \tilde{\mathbf{e}}^T(t) \tilde{\mathbf{e}}(t) \right)   +   \mathbb{E} \left(  \mathbf{v}^T_i(t)  \mathbf{L}^T_i(t)   \mathbf{L}_i \mathbf{v}_i(t)    \right)  \nonumber \\
& \leq  F _{opt, \mathbf{s}} + 2 \epsilon .
\end{align}
The above result implies that $\frac{1}{N_B} \sum_{t \in G_l}   \mathbb{E} \left(    \tilde{\mathbf{e}}^T(t) \tilde{\mathbf{e}}(t) \right) $ is bounded by a constant. This also implies that $\frac{1}{N_B} \sum_{t \in G_l}   \mathbb{E} \left(  \tilde{e}_d(t) \right)$ is bounded by a constant $\forall d \in \{1,2,\ldots n \}$ as shown below:
\begin{align}
& \frac{1}{N_B} \sum_{t \in G_l}  \mathbb{E} \left(  \tilde{e}_d(t) \right)  \nonumber \\
& \leq \frac{1}{N_B} \sum_{t \in G_l}  \left |  \mathbb{E} \left(  \tilde{e}_d(t) \right)   \right|   \nonumber\\
& \leq \frac{1}{N_B} \sum_{t \in G_l, \; \left |  \mathbb{E} \left(  \tilde{e}_d(t) \right)   \right|  < 1 } \left |  \mathbb{E} \left(  \tilde{e}_d(t) \right) \right|  + \frac{1}{N_B} \sum_{t \in G_l, \; \left |  \mathbb{E} \left(  \tilde{e}_d(t) \right)   \right|  \geq  1}   
\left | \mathbb{E} \left(  \tilde{e}_d(t) \right) \right | \nonumber \\
& \leq 1 + \frac{1}{N_B} \sum_{t \in G_l, \; \left |  \mathbb{E} \left(  \tilde{e}_d(t) \right)   \right|  \geq  1}   
\left | \mathbb{E} \left(  \tilde{e}_d(t) \right) \right | \nonumber\\
& \leq 1 + \frac{1}{N_B} \sum_{t \in G_l, \; \left |  \mathbb{E} \left(  \tilde{e}_d(t) \right)   \right|  \geq  1}   
\left | \mathbb{E} \left(  \tilde{e}_d(t) \right) \right |^2 \nonumber\\
& \stackrel{(a)}\leq 1 + \frac{1}{N_B} \sum_{t \in G_l, \; \left |  \mathbb{E} \left(  \tilde{e}_d(t) \right)   \right|  \geq  1}    \mathbb{E} \left(  \tilde{e}^2_d(t) \right) \nonumber\\
& \leq 1 + \frac{1}{N_B} \sum_{t \in G_l }    \mathbb{E} \left(  \tilde{e}^2_d(t) \right) \nonumber\\
& \leq 1 + \frac{1}{N_B} \sum_{t \in G_l }    \mathbb{E} \left(  \tilde{\mathbf{e}}^T(t) \tilde{\mathbf{e}}(t) \right),
\end{align}
where (a) follows from Jensen's inequality.
This completes the proof of Claim~\ref{claim:bounded_e_tilde}.
\end{document}